\documentclass[a4paper,11pt,fullpage]{article}
\usepackage[english]{babel}

\usepackage[left=1in,right=1in,top=1in,bottom=1in]{geometry}

\usepackage{amsmath}
\usepackage{amsthm}
\usepackage{amsfonts}
\usepackage{amssymb}
\usepackage{amsxtra}
\usepackage{latexsym}
\usepackage{ifthen}
\usepackage{cite}
\usepackage{esint}
\usepackage[cp1250]{inputenc}

\usepackage{epic}
\usepackage{eepic}
\usepackage{xcolor}

%%%%%%%%%%%% Yev %%%%%%%%%%%%%%%
%\usepackage{ulem}
%\usepackage{xcolor}
%\usepackage[notref,notcite]{showkeys}
%%%%%%%%%%%% Yev %%%%%%%%%%%%%%%

\DeclareMathOperator*{\esssup}{ess\,sup}

\DeclareMathOperator*{\essinf}{ess\,inf}

\DeclareMathOperator*{\osc}{osc}

\numberwithin{equation}{section}
\newtheorem{theorem}{Theorem}[section]
\newtheorem{lemma}{Lemma}[section]
\newtheorem{remark}{Remark}[section]
\newtheorem{definition}{Definition}[section]

\def\XXint#1#2#3{{\setbox0=\hbox{$#1{#2#3}{\int}$}
     \vcenter{\hbox{$#2#3$}}\kern-.5\wd0}}

%\numberwithin{equation}{section}

%\newcommand{\osc}{osc}

\pagestyle{myheadings}

\begin{document}

\title{Harnack inequality for solutions of the  $p(x)$-Laplace equation under the precise non-logarithmic Zhikov's conditions
%\thanks{Dedicated to the 80th anniversary of Professor Igor V. Skrypnik.}
}

\author{ Igor I. Skrypnik, \ Yevgeniia A. Yevgenieva
 }

  \maketitle

  \begin{abstract}
We prove continuity and Harnack's inequality for bounded solutions to the equation
$$
{\rm div}\big(|\nabla u|^{p(x)-2}\,\nabla u \big)=0,
\quad p(x)= p + L\frac{\log\log\frac{1}{|x-x_{0}|}}{\log\frac{1}{|x-x_{0}|}},\quad L > 0,
$$
under the precise non-logarithmic condition on the function $p(x)$.

\textbf{Keywords:}
$p(x)$-Laplace equation, non-logarithmic conditions, continuity of solutions, Harnack's inequality.

\textbf{MSC (2010)}: 35B09, 35B40, 35B45, 35B65.

\end{abstract}

\pagestyle{myheadings} \thispagestyle{plain}
\markboth{Igor I. Skrypnik}
{ Harnack inequality for solutions of the  $p(x)$-Laplace equation . . . .}

\section{Introduction and main results}\label{Introduction}

Let $\Omega$ be a bounded domain in $\mathbb{R}^{n}$, $n\geqslant2$.
In this paper we are concerned with elliptic equations of the type
\begin{equation}\label{eq1.1}
{\rm div} \mathbf{A}(x, \nabla u)=0, \quad x\in\Omega.
\end{equation}
%where $\Omega$ is a bounded domain in $\mathbb{R}^{n}$, $n\geqslant2$.
We suppose that the functions $\mathbf{A}:\Omega\times\mathbb{R}^{n}\rightarrow\mathbb{R}^{n}$
are such that $\mathbf{A}(\cdot,\xi)$ are Lebesgue measurable for all $\xi\in \mathbb{R}^{n}$,
and $\mathbf{A}(x,\cdot)$ are continuous for almost all $x\in\Omega$.
We assume also that the following structure conditions are satisfied
\begin{equation}\label{eq1.2}
\begin{aligned}
\mathbf{A}(x,\xi)\,\xi&\geqslant K_{1}\,|\xi|^{p(x)},
\\
|\mathbf{A}(x,\xi)|&\leqslant K_{2}\,|\xi|^{p(x)-1},
\end{aligned}
\end{equation}
where $K_{1}$, $K_{2}$ are positive constants, $p(x)=p+p(|x-x_{0}|),\quad p(|x-x_{0}|)=L\dfrac{\log\log\frac{1}{|x-x_{0}|}}{\log\frac{1}{|x-x_{0}|}}$ and $L > 0$.

The aim of this paper is to establish basic qualitative properties such as
continuity of bounded solutions and Harnack's inequality
for non-negative bounded solutions to equation \eqref{eq1.1}.

Before formulating the main results, we say few words
concerning the history of the problem. The study of regularity of minima
of functionals with non-standard growth has been initiated by Zhikov
\cite{ZhikIzv1983, ZhikIzv1986, ZhikJMathPh94, ZhikJMathPh9798, ZhikKozlOlein94},
Marcellini \cite{Marcellini1989, Marcellini1991}, and Lieberman \cite{Lieberman91},
and in the last thirty years, the qualitative theory
of second order elliptic and parabolic equations with so-called ''logarithmic'' condition, i.e. if
\begin{equation}\label{eq1.3}
\osc\limits_{B_{r}(x_{0})}\,p(x) \leqslant \frac{L}{\log\frac{1}{r}},\quad 0< r< 1,\quad 0< L< \infty,
\end{equation}
 has been actively developed
(see e.g.
\cite{Alhutov97, AlhutovMathSb05, AlhutovKrash04, AlkhSurnApplAn19, AlkhSurnJmathSci20, BarColMing, BarColMingStPt16, BarColMingCalc.Var.18, BenHarHasKarp20, BurchSkrPotAn, ColMing218, ColMing15, ColMingJFnctAn16, DienHarHastRuzVarEpn, Fan1995, FanZhao1999,
HarHastOrlicz, HarHasZAn19, HarHastLee18, HarHastToiv17, HarKuuLukMarPar, Krash2002, Ok, SkrVoitUMB19, SkrVoitNA20, Skr, Sur}
for references). Equations of this type and systems of such equations arise in various
problems of mathematical physics (see e.g. the monographs
\cite{AntDiazShm2002_monogr, HarHastOrlicz, Ruzicka2000, Weickert} and references therein).

The case when condition \eqref{eq1.3} is replaced by the condition
\begin{equation}\label{eq1.4}
\osc\limits_{B_{r}(x_{0})}\,p(x) \leqslant \frac{\mu(r)}{\log\frac{1}{r}},\quad \lim\limits_{r\rightarrow 0}\mu(r)=\infty,\quad
\lim\limits_{r\rightarrow 0}\frac{\mu(r)}{\log\frac{1}{r}}=0,
\end{equation}
 differs substantially from the
logarithmic case. It turns out that such non-logarithmic condition is a precise condition for the smoothness of finite functions in the corresponding Sobolev space $W^{1,p(x)}(\Omega)$. Thus this case is extremely interesting to study. But to our knowledge there are only few results in this direction.
Zhikov \cite{ZhikPOMI04}
obtained a generalization of the logarithmic condition which guaranteed the density
of smooth functions in Sobolev space $W^{1,p(x)}(\Omega)$.  Particularly,
this result holds if $1<p\leqslant p(x)$ and
\begin{equation}\label{eq1.5}
\osc\limits_{B_{r}(x_{0})}\,p(x) \leqslant L\,\frac{\log\log\frac{1}{r}}{\log\frac{1}{r}},\quad 0< L \leqslant \frac{p}{n}.
\end{equation}
Later Zhikov and Pastukhova \cite{ZhikPast2008MatSb}
 proved higher integrability
of the gradient of solutions to the $p(x)$-Laplace equation under the same condition.
Interior continuity, continuity up to the boundary and Harnack's inequality
to the $p(x)$-Laplace equation were proved in \cite{AlhutovKrash08}, \cite{AlkhSurnAlgAn19} and  \cite{SurnPrepr2018}
under condition \eqref{eq1.4} and
\begin{equation}\label{eq1.6}
\int\limits_{0}\,\exp\big(-\gamma\,\exp(\mu^{c}(r))\big)\frac{dr}{r} = +\infty,
\end{equation}
with some numbers $\gamma, c >1$. For example, the function $\mu(r)= L\,\log\log\log\dfrac{1}{r}$ stisfies conditions \eqref{eq1.4},
\eqref{eq1.6}, provided that $L$ is a sufficiently small positive number.

These results were generalized in \cite{SkrVoitNA20, ShSkrVoit20}
for a wide class of elliptic and parabolic equations with non-logarithmic Orlicz growth.
Particularly, it was proved in \cite{SkrVoitNA20} that under conditions \eqref{eq1.4}, \eqref{eq1.6} functions from the correspondent
De\,Giorgi's $\mathcal{B}_{1}(\Omega)$ classes are continuous and moreover, it was shown
that the solutions of the correspondent elliptic and parabolic equations with non-standard
growth belong to these classes.

The exponential condition of the type \eqref{eq1.6} was substantially refined in \cite{HadSkrVoi}. Particularly, the continuity of solutions
to double-phase and degenerate double-phase elliptic equations
\begin{equation*}
div\bigg(|\nabla u|^{p-2}\nabla u+a(x)|\nabla u|^{q-2}\nabla u\bigg)=0,\quad q>p,
\end{equation*}
and
\begin{equation*}
div\bigg(|\nabla u|^{p-2}\nabla u \big(1+\log(1+b(x)|\nabla u|)\big)\bigg)=0
\end{equation*}
was proved under the conditions
\begin{equation*}
\osc\limits_{B_{r}(x_{0})}\,a(x)\leqslant A\,\mu(r)^{q-p}\,r^{q-p},\quad \osc\limits_{B_{r}(x_{0})}\,b(x)\leqslant B\mu(r)\,r,\quad
\int\limits_{0}\frac{dr}{\mu(r)}=+\infty.
\end{equation*}
Note that the function $\mu(r)=\log\dfrac{1}{r}$ satisfies the above conditions.
In the present paper the continuity and the Harnack's type inequality have been proved under the conditions
similar to \eqref{eq1.5}.

Before formulating the main results, let us recall the definition of a bounded weak solution
to equation \eqref{eq1.1}. We introduce $W(\Omega)$ as a class of functions
$u\in W^{1,1}(\Omega)$, such that $\int\limits_{\Omega}|\nabla u|^{p(x)}\,dx<+\infty$, and $W_{0}(\Omega) =
W(\Omega) \cap W^{1,1}_{0}(\Omega).$

\begin{definition}
{\rm
We say that a function $u\in W(\Omega)\cap L^{\infty}(\Omega)$ is a bounded weak
sub(super)-solution
to equation \eqref{eq1.1} if %for any $\varphi\in W_{0}(\Omega)$ the following identity holds:
\begin{equation}\label{eq1.7}
\int_{\Omega}\mathbf{A}(x, \nabla u)\,\nabla\varphi \,dx\leqslant(\geqslant)\,0,
\end{equation}
holds for all non-negative test functions $\varphi\in W_{0}(\Omega)$.
}
\end{definition}

The following Theorem is the first main result of this paper.
\begin{theorem}\label{th1.1}
Let $u$ be a bounded weak solution of equation \eqref{eq1.1} and let conditions \eqref{eq1.2}, \eqref{eq1.3} be fulfilled,
then $u$ is H\"{o}lder continuous at point $x_{0}$.
\end{theorem}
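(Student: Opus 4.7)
The plan is to prove Hölder continuity at $x_{0}$ by a De Giorgi-type iteration scheme adapted to the variable exponent $p(x)$ prescribed in \eqref{eq1.2}. First, I would derive the basic Caccioppoli inequality for the truncations $(u-k)_{\pm}$: inserting the test function $\varphi = \pm(u-k)_{\pm}\,\xi^{\,p^{+}}$ with a standard cut-off $\xi$ supported in $B_{r}(x_{0})$ (where $p^{+} := \sup_{B_{r}(x_{0})} p(x)$) into \eqref{eq1.7} and exploiting the structure conditions \eqref{eq1.2} yields
\begin{equation*}
\int_{B_{\rho}(x_{0})} |\nabla(u-k)_{\pm}|^{p(x)}\,dx
\leqslant C \int_{B_{r}(x_{0})} \frac{(u-k)_{\pm}^{\,p(x)}}{(r-\rho)^{p(x)}}\,dx,
\qquad 0<\rho<r,
\end{equation*}
so that $u$ belongs to an appropriate variable-exponent De Giorgi class.

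Second, I would combine this energy bound with the Sobolev embedding for $W^{1,p(x)}$ on shrinking balls to obtain a local boundedness estimate for $(u-k)_{\pm}$, and the measure-theoretic De Giorgi lemma: if $|\{u>k\}\cap B_{r}(x_{0})| \leqslant \nu_{0}|B_{r}|$ with $\nu_{0}$ sufficiently small, then $u \leqslant k + \omega(r)/2$ on $B_{r/2}(x_{0})$, where $\omega(r) := \osc_{B_{r}(x_{0})} u$. At each scale $r$ at least one of the truncations $(u - \mu_{+} + \omega(r)/2)_{+}$ or $(\mu_{-} + \omega(r)/2 - u)_{+}$ sits on a set of measure at most $|B_{r}|/2$, so applying the lemma to the relevant one produces the oscillation-decay estimate $\omega(\theta r) \leqslant \lambda \,\omega(r)$ for some $\theta,\lambda \in (0,1)$ depending only on the data. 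Iterating this inequality over a geometric sequence of radii $r_{k} = \theta^{k} r_{0}$ yields the desired Hölder modulus at $x_{0}$.

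The main obstacle is the handling of the factors $r^{\,p(x)-p(y)}$ arising when one compares the variable exponent at different points of $B_{r}(x_{0})$, and when one passes from the Caccioppoli estimate to the De Giorgi class via the variable-exponent Sobolev inequality. Under the specific profile $p(x) - p = L\,\log\log(1/|x-x_{0}|)/\log(1/|x-x_{0}|)$ fixed by \eqref{eq1.2}, these factors grow like $(\log (1/r))^{L}$ as $r\to 0$, so the iteration constants pick up a slowly varying dependence on $r$ that would be absent under the purely logarithmic condition \eqref{eq1.3}. The delicate part will be to track this dependence explicitly and to absorb it into the geometric decay factor $\lambda<1$ furnished by the De Giorgi lemma, using that $p(x)-p \to 0$ as $x \to x_{0}$ together with the quantitative rate provided by the precise form of $p(x)$; this is what ultimately upgrades mere continuity at $x_{0}$ to a Hölder modulus and is where the sharp form of Zhikov's non-logarithmic condition \eqref{eq1.5} enters.
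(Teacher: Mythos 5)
Your proposal follows the standard De Giorgi scheme, but the crucial third step --- ``track the factor $(\log(1/r))^{L}$ and absorb it into the geometric decay factor $\lambda<1$'' --- is exactly where the argument breaks down, and the paper is explicit about why. If you run the ordinary Caccioppoli/Sobolev/De\,Giorgi iteration on the unweighted quantities $(u-k)_\pm$ and bound the mismatch between $p(x)$ and $p_- = \min_{B_r} p(x)$ by Young's inequality, the iteration constant at scale $r$ acquires a factor of the size $\mu(r) = L\log\log(1/r)$, and the resulting oscillation lower bound on $B_{r/2}$ is of the form $\gamma^{-1}\exp(-\gamma\exp(\mu^c(r)))$. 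Summing this over dyadic scales leads to the requirement \eqref{eq1.6}, which \emph{fails} for $\mu(r)=L\log\log(1/r)$; it is not a slowly varying perturbation that the fixed $\lambda<1$ can absorb. So the plan, if carried out literally, only recovers the weaker results of \cite{AlhutovKrash08, SkrVoitNA20} under \eqref{eq1.6}, not the theorem as stated.

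The missing idea in your proposal is the reweighting that the paper introduces in \eqref{eq1.16} and Section~2: rather than comparing exponents and paying the $\log\log(1/r)$ penalty, one folds the variable exponent into the weight
$w_\pm(x,u,k,r)=\bigl(M_\pm(u,k,r)/r\bigr)^{p(|x-x_0|)}$
and writes the Caccioppoli estimate \eqref{eq2.1} with this weight on both sides. The key observation (Lemma~\ref{lem2.1}) is that for the \emph{specific} exponent $p(|x-x_0|)=L\log\log\frac{1}{|x-x_0|}/\log\frac{1}{|x-x_0|}$, the weight $w_\pm$ satisfies two-sided Muckenhoupt-type bounds \eqref{eq2.5}--\eqref{eq2.6} with constants independent of $r$, so long as $r$ is below a fixed threshold (condition \eqref{eq2.4}). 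This lets one run a genuine \emph{weighted} Sobolev inequality (Lemma~\ref{lem2.2}), and hence a weighted De\,Giorgi lemma (Lemma~\ref{lem2.3}) and expansion-of-positivity lemma (Lemma~\ref{lem2.5}), with scale-independent constants. The oscillation decay $\omega_{r/2}\le(1-2^{-s_*-1})\omega_r+2^{s_*+1}r$ that emerges carries no $r$-dependent blowup, and iterating gives the Hölder modulus directly. Without the weight --- and without the verification that this particular weight is doubling and satisfies the reverse Hölder estimates of Lemma~\ref{lem2.1} --- the De\,Giorgi machinery does not close under the precise non-logarithmic condition you are working with.

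A secondary, smaller point: the local boundedness step you mention is not needed here (the solution is assumed bounded in $L^\infty(\Omega)$), and the Caccioppoli test function the paper uses is $\varphi=(u-k)_\pm\zeta^q$ with a fixed power $q$, not $\zeta^{p^+}$; the essential manipulation after that is the two applications of Young's inequality that convert $|\nabla u|^{p(x)}$ into the $w_\pm$-weighted $|\nabla u|^p$ form \eqref{eq2.1}.
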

The next result is a weak Harnack type inequality for non-negative super-solutions.
\begin{theorem}\label{th1.2}
Let $u$ be a bounded non-negative weak super-solution to equation \eqref{eq1.1}, let conditions \eqref{eq1.2}, \eqref{eq1.3} be fulfilled.
Assume also that
\begin{equation}\label{eq1.8}
\big(\mathbf{A}(x,\xi)-\mathbf{A}(x,\eta)\big)(\xi-\eta) >0,\quad \xi, \eta \in \mathbb{R}^{n},
\quad \xi\neq\eta,
\end{equation}
then there exist numbers $\gamma, \bar{\gamma} >0$ depending only on $n, p, K_{1}, K_{2}$ and $M=\sup\limits_{\Omega} u$,
such that for any $\theta\in (0, p-1)$ there holds
\begin{equation}\label{eq1.9}
\bigg(|B_{\rho}(x_{0})|^{-1}\,\int\limits_{B_{\rho}(x_{0})}\,u^{\theta}\,dx\bigg)^{\frac{1}{\theta}} \leqslant
\bigg(\frac{\gamma}{p-1-\theta}\bigg)^{\frac{1}{\theta}}\, \big( \min\limits_{B_{\frac{\rho}{2}}(x_{0})} u +\,\rho\big),
\end{equation}
provided that $B_{16\rho}(x_{0})\subset \Omega$ and
\begin{equation}\label{eq1.10}
\frac{1}{\log\log\dfrac{1}{16\rho}} + \bar{\gamma}\,L\,\frac{\log\log\dfrac{1}{16\rho}}{\log\dfrac{1}{16\rho}} \leqslant 1.
\end{equation}
\end{theorem}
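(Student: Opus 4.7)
The plan is to adapt Moser's iteration together with a Bombieri--Giusti type crossover lemma to the present variable-exponent setting, using hypothesis \eqref{eq1.10} to dominate the errors accumulated by the non-logarithmic oscillation of $p(x)$. The shift by $\rho$ on the right-hand side of \eqref{eq1.9} will enter naturally as a regularizing parameter at points where $u$ may vanish, forced by the inhomogeneity of the Caccioppoli constants.

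First, I would derive a family of Caccioppoli-type energy estimates for the super-solution by testing \eqref{eq1.7} against $\varphi=(u+\rho)^{-\alpha}\zeta^{p^{+}}$ for $\alpha\in(0,p-1)$, where $\zeta$ is a standard cutoff and $p^{+}:=\sup_{B_{2\rho}(x_{0})}p(x)$. Using \eqref{eq1.2} and Young's inequality this produces
$$
\int(u+\rho)^{-1-\alpha}\,|\nabla u|^{p(x)}\,\zeta^{p^{+}}\,dx \leqslant \frac{\gamma}{p-1-\alpha}\int(u+\rho)^{p(x)-1-\alpha}\,|\nabla\zeta|^{p(x)}\,dx.
$$
The blow-up $(p-1-\alpha)^{-1}$ as $\alpha\uparrow p-1$ is precisely the source of the factor $(\gamma/(p-1-\theta))^{1/\theta}$ in \eqref{eq1.9}, while the dependence on $M=\sup_{\Omega}u$ in the final constant enters because comparing variable powers of $(u+\rho)$ with fixed ones costs a factor bounded by $M^{\osc_{B_{2\rho}}p}$. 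Separately, testing with $\varphi=(u+\rho)^{-1}\zeta^{p^{+}}$ yields a logarithmic energy estimate placing $\log(u+\rho)$ into a $\mathrm{BMO}$-type class with controlled semi-norm.

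Second, on the basis of these energy estimates, I would run in parallel a forward Moser iteration for $\theta\in(0,p-1)$ and a reverse iteration in $\alpha>0$, both driven by the variable-exponent Sobolev embedding on a geometric sequence of shrinking balls $B_{\rho_{k}}$; the reverse iteration yields a local $L^{\infty}$--$L^{\varepsilon_{0}}$ estimate for $(u+\rho)^{-1}$ on $B_{\rho/2}(x_{0})$. A Bombieri--Giusti/John--Nirenberg crossover lemma then glues the forward and reverse iterations through the $\mathrm{BMO}$ bound on $\log(u+\rho)$, producing \eqref{eq1.9}. The strict monotonicity \eqref{eq1.8} is invoked at the expansion-of-positivity step to convert integral averages into pointwise control via De Giorgi truncations.

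The main obstacle is the control of the errors accumulated at each iteration step because of the variable exponent. A single step introduces a factor of the order of $\rho_{k}^{-\osc_{B_{\rho_{k}}}p}$, and the assumption $\osc_{B_{\rho}}p\leqslant L\log\log(1/\rho)/\log(1/\rho)$ reduces this to $(\log(1/\rho_{k}))^{L}$, which grows only subpolynomially. The infinite product of such factors across a geometric sequence of radii stays bounded precisely under \eqref{eq1.10}; verifying that this condition is the sharp calibration needed to close both the Moser scheme and the crossover step is the delicate bookkeeping that forms the technical heart of the proof.
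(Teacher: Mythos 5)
Your plan is the classical Moser--Bombieri--Giusti route: Caccioppoli estimates for powers of $(u+\rho)$, a $\mathrm{BMO}$ bound on $\log(u+\rho)$, two-sided Moser iteration, and a John--Nirenberg crossover. This is precisely the DiBenedetto--Trudinger strategy that the paper states at the outset cannot be carried through under the non-logarithmic hypothesis, and the obstruction is concrete. The crossover step requires a measure-to-point expansion of positivity whose input constant deteriorates like $\alpha(r)=\gamma^{-1}\exp(-\mu^{\beta}(r))$ with $\mu(r)=L\log\log\frac{1}{r}$; feeding this into a De\,Giorgi or Moser scheme produces a pointwise lower bound of the form $\gamma^{-1}\exp\big(-\gamma\exp(\mu^{c}(r))\big)$, which can be summed over dyadic radii only when \eqref{eq1.6} holds. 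For $\mu(r)=L\log\log\frac{1}{r}$ the integral in \eqref{eq1.6} converges, so the scheme does not close. Your closing claim that ``the infinite product of such factors $\ldots$ stays bounded precisely under \eqref{eq1.10}'' is asserted rather than verified, and it is exactly the step that fails: the earlier results on $p(x)$-Harnack inequalities cited in the introduction used this route and therefore needed the stronger pair of hypotheses \eqref{eq1.4}--\eqref{eq1.6}.

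The paper's actual proof avoids any multi-scale Moser iteration by a Mazya--Landis comparison argument. For each level $m>0$ it constructs the auxiliary $\mathbf{A}$-harmonic potential $v(\cdot,m)$ in $\mathcal{D}=B_{16\rho}(x_{0})\setminus E(\rho,m)$, where $E(\rho,m)=B_{\rho}(x_{0})\cap\{u>m\}$, with boundary data $m$ on $E(\rho,m)$ and $0$ on $\partial B_{16\rho}(x_{0})$; the strict monotonicity \eqref{eq1.8} gives $u\geqslant v$ in $\mathcal{D}$ by comparison. Section~3 then establishes matching upper and lower capacitary bounds for $v$ on the annulus $K(\tfrac{3}{2}\rho,16\rho)$ using weighted De\,Giorgi classes with the Muckenhoupt-type weight $w(x,\rho_{1})=\big(1+M(\rho_{1})/\rho_{1}\big)^{p(|x-x_{0}|)}$ (Lemmas~\ref{lem2.1}, \ref{lem2.2}, \ref{lem3.1}--\ref{lem3.3}), together with a single application of expansion of positivity. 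This yields the one measure estimate $|E(\rho,m)|\leqslant C_{2}|B_{\rho}(x_{0})|\,m^{1-p}\big(\rho+\min_{B_{\rho/2}}u\big)^{p-1}$ of Lemma~\ref{lem4.1}, from which \eqref{eq1.9} follows by integrating in $m$. The missing idea in your proposal is this replacement of iteration-over-scales by a capacitary comparison; without it the argument does not reach condition \eqref{eq1.10}.
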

The following Theorem is  Harnack's inequality
\begin{theorem}\label{th1.3}
Let  $u$ be a bounded non-negative  weak sub-solution to equation \eqref{eq1.1}, let conditions \eqref{eq1.2}, \eqref{eq1.3} be fulfilled.
Then  there exist positive numbers $\gamma $, $\bar{\gamma}_{1}$ depending only on $n, p, K_{1},\\ K_{2}, M$
such that for any $\theta\in (0,p-1)$
\begin{equation}\label{eq1.11}
\max\limits_{B_{\frac{\rho}{2}(x_{0})}} u \leqslant
\gamma\,\bigg(|B_{\rho}(x_{0})|^{-1}\int\limits_{B_{\rho}(x_{0})}\,u^{\theta}\,dx\bigg)^{\frac{1}{\theta}} +\gamma\,\rho,
\end{equation}
provided that $B_{16\rho}(x_{0})\subset \Omega$ and
\begin{equation}\label{eq1.12}
\frac{1}{\log\log\dfrac{1}{16\rho}} + \bar{\gamma}_{1}\,L\,\frac{\log\log\dfrac{1}{16\rho}}{\log\dfrac{1}{16\rho}} \leqslant 1.
\end{equation}

Particularly, if $u$ is a bounded non-negative weak solution to equation \eqref{eq1.1}, then
\begin{equation}\label{eq1.13}
\max\limits_{B_{\frac{\rho}{2}(x_{0})}} u \leqslant \gamma \,(\min\limits_{B_{\frac{\rho}{2}}(x_{0})} u +\,\rho),
\end{equation}
provided that $B_{16\rho}(x_{0})\subset \Omega$ and
\begin{equation}\label{eq1.14}
\frac{1}{\log\log\dfrac{1}{16\rho}} + \max(\bar{\gamma}, \bar{\gamma}_{1})\,L\,\frac{\log\log\dfrac{1}{16\rho}}{\log\dfrac{1}{16\rho}} \leqslant 1,
\end{equation}
where $\bar{\gamma} >0$ is the constant defined in Theorem \ref{th1.2}.
\end{theorem}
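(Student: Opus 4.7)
\textbf{Proof plan for Theorem \ref{th1.3}.}
The plan is to first prove the local boundedness estimate \eqref{eq1.11} by a Moser-type iteration adapted to the non-logarithmic variable exponent, and then to deduce the full Harnack inequality \eqref{eq1.13} as an immediate consequence of \eqref{eq1.11} combined with the weak Harnack estimate \eqref{eq1.9} of Theorem \ref{th1.2}. Indeed, a non-negative bounded weak solution of \eqref{eq1.1} is simultaneously a sub- and a super-solution, so \eqref{eq1.11} controls $\max_{B_{\rho/2}(x_{0})} u$ by a weighted $L^{\theta}$-average plus $\rho$, while \eqref{eq1.9}, applied with the same exponent $\theta\in(0,p-1)$, bounds that average by $\min_{B_{\rho/2}(x_{0})} u +\rho$. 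Composing the two and taking the radius restriction \eqref{eq1.14} as the more restrictive of \eqref{eq1.10} and \eqref{eq1.12} yields \eqref{eq1.13} with the constant $\max(\bar\gamma,\bar\gamma_{1})$.

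For \eqref{eq1.11}, I would begin with a Caccioppoli-type inequality for powers of $u+\rho$. Testing \eqref{eq1.7} with $\varphi=(u+\rho)^{q}\zeta^{p^{+}}$, where $q>0$, $p^{+}=\sup_{B_{\rho}}p(x)$ and $\zeta$ is a smooth cutoff between two concentric balls contained in $B_{\rho}(x_{0})$, and using the structure conditions \eqref{eq1.2} together with Young's inequality, one obtains
\begin{equation*}
\int (u+\rho)^{q-1}|\nabla u|^{p(x)}\zeta^{p^{+}}\,dx \leqslant C\,(q+1)^{p^{+}}\int (u+\rho)^{q+p(x)-1}|\nabla\zeta|^{p(x)}\,dx.
\end{equation*}
Rewriting the left-hand side as the $p$-energy of $(u+\rho)^{(q+p-1)/p}\zeta^{p^{+}/p}$ and applying the classical Sobolev embedding for the constant exponent $p$ gives a reverse H\"older inequality on nested balls. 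Iterating along the geometric sequence $\rho_{j}=\rho/2+\rho\,2^{-j-1}$ with exponents $q_{j}=\theta\,\kappa^{j}$ (with $\kappa=n/(n-p)$ when $p<n$, and any $\kappa>1$ otherwise) produces the desired estimate
\begin{equation*}
\max_{B_{\rho/2}(x_{0})}u\leqslant \gamma\,\bigg(|B_{\rho}(x_{0})|^{-1}\int_{B_{\rho}(x_{0})}u^{\theta}\,dx\bigg)^{\frac{1}{\theta}}+\gamma\,\rho.
\end{equation*}

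The main obstacle is controlling the variable-exponent error throughout the iteration. Writing $a^{p(x)}\leqslant a^{p}+a^{p+\osc_{B_{\rho}}p}$ and using $|\nabla\zeta|\sim\rho^{-1}$, each step produces a spurious multiplicative factor of order $\rho^{-\osc_{B_{\rho}}p}$, which under the precise non-logarithmic form $p(x)-p=L\log\log(1/|x-x_{0}|)/\log(1/|x-x_{0}|)$ behaves like $(\log(1/\rho))^{L}$ and diverges as $\rho\to 0$. The smallness condition \eqref{eq1.12} is tailored precisely so that this factor, together with the a priori bound $\|u\|_{\infty}\leqslant M$, can be absorbed at each iteration step: it guarantees both that $(u+\rho)^{\osc_{B_{\rho}}p}\leqslant C(M,L)$ and that the product $\prod_{j}C_{j}$ of Moser constants converges. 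The parameter $\bar\gamma_{1}$ is fixed exactly to enforce this summability, in a way reminiscent of Zhikov's threshold $L\leqslant p/n$ in \eqref{eq1.5}. Once \eqref{eq1.11} is in place, \eqref{eq1.13} follows by splicing it with \eqref{eq1.9} as indicated above.
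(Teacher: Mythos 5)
The deduction of \eqref{eq1.13} from \eqref{eq1.11} and \eqref{eq1.9} is correct and matches what the paper does (the paper fixes $\theta = (p-1)/2$; any fixed $\theta\in(0,p-1)$ works). The problem is your argument for the local boundedness estimate \eqref{eq1.11}.

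You propose a bare Moser iteration: test with powers $(u+\rho)^{q}\zeta^{p^{+}}$, drop to the constant exponent $p$, apply the classical Sobolev inequality, and control the variable-exponent discrepancy by the a priori bound $\|u\|_{\infty}\leqslant M$ together with condition \eqref{eq1.12}. You correctly identify that each iteration step produces a spurious factor of order $\rho^{-\osc_{B_{\rho}}p(\cdot)}=(\log(1/\rho))^{L}$, but the claim that \eqref{eq1.12} lets you absorb it is false. Condition \eqref{eq1.12} is a constraint of the form ``$\rho$ small enough given $L$ and $\bar\gamma_{1}$''; for any fixed $L>0$ it is satisfied for all sufficiently small $\rho$, and yet $(\log(1/\rho))^{L}\to\infty$ as $\rho\to 0$. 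The iteration constant therefore blows up, whereas the theorem asserts $\gamma$ depends only on $n,p,K_{1},K_{2},M$ (notably not on $\rho$, and not even on $L$). Indeed, the introduction (the discussion around \eqref{eq1.15}--\eqref{eq1.16}) explains that this naive route inevitably leads to the exponential condition \eqref{eq1.6}, which fails for $\mu(r)=L\log\log\tfrac1r$. Your comparison to Zhikov's threshold $L\leqslant p/n$ is also off the mark: here $L$ is arbitrary and the radius restriction, not the size of $L$, is what is tuned.

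The ingredient you are missing is the weighted framework. The paper's proof in Section 4.2 is a De Giorgi iteration in levels $k_{j}=k-k2^{-j}$ (not a Moser iteration in powers), run in the measure $w_{0}(x)\,dx$ with $w_{0}(x)=(1+M_{0}/\rho_{0})^{p(|x-x_{0}|)}$. Writing the Caccioppoli estimate so that $w_{0}$ appears on both sides makes the dangerous factor $(M_{0}/\rho_{0})^{\osc p}$ telescope out, and the Muckenhoupt-type bounds \eqref{eq2.5}--\eqref{eq2.6} of Lemma \ref{lem2.1} together with the weighted Sobolev inequality of Lemma \ref{lem2.2} show that the weighted averages $[w_{0}(B_{0})]^{-1}\int_{B_{0}}w_{0}u^{p}\,dx$ are controlled by $(\rho^{-n}\int_{B_{0}}u^{\theta}\,dx)^{p/\theta}$ up to constants depending only on the data. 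This is where condition \eqref{eq1.12} actually enters: it guarantees the weight is $A_{\infty}$ with uniform constants, not that $(\log(1/\rho))^{L}$ is bounded. Without this weighting your iteration does not close.
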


In the present paper, we substantially refine the results of \cite{AlhutovKrash08, AlkhSurnAlgAn19, ShSkrVoit20, SkrVoitUMB19, SkrVoitNA20, SurnPrepr2018}.
We would like to mention the approach taken in this paper. To prove the interior continuity we use
De\,Giorgi's approach. Let us consider the standard De\,Giorgi's class $DG_{p(\cdot)}(\Omega)$ of functions $u$ which corresponds to
equation \eqref{eq1.1} :
\begin{equation}\label{eq1.15}
\int\limits_{B_{r}(x_{0})}\,|\nabla(u-k)_{\pm}|^{p(x)}\,\zeta^{q}\,dx \leqslant
\gamma \int\limits_{B_{r}(x_{0})}\bigg(\frac{u-k}{r\sigma}\bigg)_{\pm}^{p(x)}\,dx,\quad k\in \mathbb{R}^{1},\quad \sigma\in(0,1),
\end{equation}
$B_{16r}(x_{0})\subset \Omega$ and $\zeta(x)$ is the correspondent cut-off function for the ball $B_{16r}(x_{0})$. Using the Young inequality,
by conditions \eqref{eq1.4} we have
\begin{multline*}
\int\limits_{B_{r}(x_{0})}\,|\nabla(u-k)_{\pm}|^{p_{-}}\,\zeta^{q}\,dx \leqslant
\gamma\,\sigma^{-\gamma}\,\mu(r)\,\int\limits_{B_{r}(x_{0})}\bigg(\frac{u-k}{r}\bigg)_{\pm}^{p_{-}}\,dx +\\
+ \gamma \big|B_{r}(x_{0})\cap\{(u-k)_{\pm}>0 \}\big|, \quad p_{-}:= \min\limits_{B_{r}(x_{0})}p(x).
\end{multline*}
This estimate leads us to condition \eqref{eq1.6} (see, e.g. \cite{SkrVoitUMB19, SkrVoitNA20}). It is easy to see that condition \eqref{eq1.6} fails for the function
$\mu(r)=L\log\log\frac{1}{r}$. To avoid this, using the Young inequality and our choice of $p(x)$ we rewrite inequality
\eqref{eq1.15} as
\begin{multline}\label{eq1.16}
\int\limits_{B_{r}(x_{0})}\,\bigg(\frac{M_{\pm}(k,r)}{r}\bigg)^{p(|x-x_{0}|)}|\nabla(u-k)_{\pm}|^{p}\,\zeta^{q}\,dx \leqslant \\ \leqslant
\gamma\,\sigma^{-\gamma}\,\bigg(\frac{M_{\pm}(k,r)}{r}\bigg)^{p}\int\limits_{B_{r}(x_{0})\cap\{(u-k)_{\pm}>0 \}}\bigg(\frac{M_{\pm}(k,r)}{r}\bigg)^{p(|x-x_{0}|)}\,dx,\quad  M_{\pm}(k,r):=\sup\limits_{B_{r}(x_{0}}(u-k)_{\pm}.
\end{multline}
It appears that the weight $\bigg(\dfrac{M_{\pm}(k,r)}{r}\bigg)^{p(|x-x_{0}|)}$ satisfies the Muckenhoupt type properties. In Section $2$
we define the correspondent weighted De Giorgi's classes by inequalities \eqref{eq1.16} and prove the H\"{o}lder continuity at point $x_{0}$ for the functions which belong to these classes.

The main difficulty arising in the proof of the Harnack type inequalities is related to the so-called theorem on the expansion of positivity.
Roughly speaking, having information on the measure of the "positivity set" of $u$ over the ball
$B_{r}(\overline{x}) \subset B_{\rho}(x_{0}) $:
$$
\left| \left\{ x\in B_{r}(\overline{x}):
u(x)\geqslant m \right\} \right| \geqslant
\alpha(r)\, |B_{r}(\overline{x})|,\quad \alpha(r)=\gamma^{-1} \exp(-\mu^{\beta}(r)),
$$
with some $r>0$, $m>0$ and $\gamma > 1$, and using the standard De\,Giorgi's or
Moser's arguments, we inevitably arrive at  the estimate
$$
u(x)\geqslant \gamma^{-1}\,\exp\big(-\gamma\exp(\mu^{c}(r))\big), \ \ x\in B_{2r}(\overline{x}),
$$
with some $\gamma$, $c >1$. This estimate leads us to condition \eqref{eq1.6}
(see, e.g. \cite{ShSkrVoit20, SkrVoitNA20}). Note that we can not use the classical approach of Krylov and Safonov \cite{KrlvSfnv1980},
DiBenedetto and Trudinger \cite{DiBTru}, as it was done in \cite{BarColMing} under the logarithmic conditions. We also can not use the local clustering lemma of DiBenedetto, Gianazza and Vespri  \cite{DiBGiaVes} (see also \cite{DiBGiaVes1, Sur} ). Difficulties arise not only due to the constant  $\alpha(r)$ which depends on $r$, but also when an additional term, that couldn't be estimated, occurs during the process of iteration from $B_{r}(\bar{x})$ to $B_{\rho}(x_{0})$. To overcome it, we use a workaround that goes back
to Mazya \cite{Maz} and Landis  \cite{Landis_uspehi1963, Landis_mngrph71} papers.

We will demonstrate our approach on the $p$-Laplacian. Fix $x_{0}\in\Omega$ and let $0<r<\rho$,\\
$E\subset B_{r}(x_{0})\subset B_{\rho}(x_{0})$, $B_{16\rho}(x_{0})\subset\Omega$ and consider solution
$v:=v(x,m)$ of the following problem:
\begin{equation}\label{eq1.17}
{\rm div}\big(|\nabla v|^{p-2} \nabla v \big)=0, \quad x\in \mathcal{D}:=B_{16\rho}(x_{0})\setminus E,
\end{equation}
\begin{equation}\label{eq1.18}
v-m\psi\in W^{1,p}_{0}(\mathcal{D}),
\end{equation}
where $m >0$ is some fixed number, and $\psi\in W^{1,p}_{0}(B_{16\rho}(x_{0}))$, $\psi=1$ on $E$.

By the well-known estimate (see e.g. \cite{GarZie}) we have
\begin{equation*}
\min\limits_{B_{4\rho}(x_{0})\setminus B_{2\rho}(x_{0})} v \geqslant \gamma^{-1}\,m\,\bigg(\frac{C_{p}(E)}{\rho^{n-p}}\bigg)^{\frac{1}{p-1}},
\end{equation*}
where $C_{p}(E)$ is a capacity of the set $E$.
By the Poincare inequality from the previous we obtain
\begin{equation}\label{eq1.19}
\min\limits_{B_{4\rho}(x_{0})\setminus B_{2\rho}(x_{0})} v \geqslant \gamma^{-1}\,m\,\bigg(\frac{|E|}{\rho^{n}}\bigg)^{\frac{1}{p-1}},
\end{equation}
Let $u$ be a non-negative bounded super-solution to the $p$-Laplace equation in $\Omega$ and construct the set
$$E(\rho, m):=B_{\rho}(x_{0})\cap\{ u > m \},\quad 0< m < \sup\limits_{\Omega} u. $$
Consider also a solution $v$ of the problem \eqref{eq1.17}, \eqref{eq1.18} with $E$ replaced by $E(\rho, m)$.
Then since $ u\geqslant v$ on $\partial \mathcal{D}$, by the maximal principle and by \eqref{eq1.19} we obtain
\begin{equation*}
m(2\rho):=\min\limits_{ B_{2\rho}(x_{0})} u \geqslant \gamma^{-1}\,m\,\bigg(\frac{|E(\rho, m)|}{\rho^{n}}\bigg)^{\frac{1}{p-1}},
\end{equation*}
which by standard arguments yields for any $\theta\in (0, p-1)$
\begin{multline*}
|B_{\rho}(x_{0})|^{-1}\,\int\limits_{B_{\rho}(x_{0})}\,u^{\theta}\,dx = |B_{\rho}(x_{0})|^{-1}\,\theta\,\int\limits_{0}^{\infty}E(\rho, m)\,m^{\theta-1}\,dm  \leqslant
m^{\theta}(2\rho) + \\ + \gamma m^{p-1}(2\rho)\int\limits_{m(2\rho)}^{\infty}\,m^{\theta-p}\,dm
\leqslant \frac{\gamma}{p-1-\theta}\,m^{\theta}(2\rho),
\end{multline*}
from which the weak Harnack type inequality follows.

In Sections $3,4$ we adapt this simple idea to the case of $p(x)$-Laplacian with non-logarithmic growth.
The weight
$$
\bigg(\frac{M(\rho_{1})}{\rho_{1}}\bigg)^{p(|x-x_{0}|)},\quad M(\rho_{1}):=\sup\limits_{B_{16\rho}(x_{0})\setminus B_{\rho_{1}}(x_{0})}\,v,
\quad \rho <\rho_{1}< 16\rho
$$
which naturally arises in the proof of Theorem \ref{th1.2} also satisfies the Muckenhoupt type conditions.

\begin{remark}
It was unexpected for authors that the modulus of continuity and the constants in the  Harnack type  inequalities do not depend on the additional term $\log\log\dfrac{1}{r}$ (usually, there is a dependency, see e.g. \cite{AlkhSurnAlgAn19, HadSkrVoi, SkrVoitNA20, SurnPrepr2018}).
\end{remark}

%!!!!! Surpise : the modulus of continuity and the constant in the weak Harnack inequality independent of the additional term $\log\log\dfrac{1}{r}$.

The rest of the paper contains the proof of the above theorems.

%Section \ref{Sec5} -- , proof of Theorem .

%%%%%%%%%%%%%%%%%%%%%%%%%%%%%%%%%%%%%%%%%%%%%%%%%%%%%%%%%%%%%%%%%%%%%%%%%%%%%%%%%%%%%%%%%%%%%%%%%%%%%%%%%%%%%%%
%%%%%%%%%%%%%%%%%%%%%%%%%%%%%%%%%%%%%%%%%%%%%%%%%%%%%%%%%%%%%%%%%%%%%%%%%%%%%%%%%%%%%%%%%%%%%%%%%%%%%%%%%%%%%%%
%%%%%%%%%%%%%%%%%%%%%%%%%%%%%%%%%%%%%%%%%%%%%%%%%%%%%%%%%%%%%%%%%%%%%%%%%%%%%%%%%%%%%%%%%%%%%%%%%%%%%%%%%%%%%%%%%%

\section{Elliptic $D G$ classes,
proof of Theorem \ref{th1.1}}\label{Sec2}

In this Section we  define the following De Giorgi's classes.
\begin{definition}
{\rm
We say that a measurable function $u: B_{R}(x_{0})\rightarrow \mathbb{R}$  belongs to the elliptic class
$D G(B_{R}(x_{0}))$ if $u\in W^{1,p}(B_{R}(x_{0}))\cap L^{\infty}(B_{R}(x_{0}))$,
$\esssup\limits_{B_{R}(x_{0})} |u|\leqslant M$ and there exists numbers $1<p<q$, $c_{1}>0$ such that for
any ball $B_{8r}(x_{0})\subset B_{R}(x_{0})$, any $k\in \mathbb{R}$, $|k|<M$, any $\sigma\in(0,1)$,
for any $\zeta\in C_{0}^{\infty}(B_{r}(x_{0}))$,
$0\leqslant\zeta\leqslant1$, $\zeta=1$ in $B_{r(1-\sigma)}(x_{0})$,
$|\nabla \zeta|\leqslant(\sigma r)^{-1}$, the following inequalities hold:
\begin{multline}\label{eq2.1}
\int\limits_{A^{\pm}_{k,r}}
\bigg(\frac{M_{\pm}(u,k,r)}{r}\bigg)^{p(|x-x_{0}|)}
|\nabla u|^{p}\,\zeta^{\,q}\,dx
\leqslant \\ \leqslant
c_{1}\,\sigma^{-q}\,\bigg(\frac{M_{\pm}(u,k,r)}{ r}\bigg)^{p}
\int\limits_{A^{\pm}_{k,r}}
\bigg(\frac{M_{\pm}(u,k,r)}{r} \bigg)^{p(|x-x_{0}|)}dx,
\end{multline}
here $(u-k)_{\pm}:=\max\{\pm(u-k), 0\}$,
$A^{\pm}_{k,r}:=B_{r}(x_{0})\cap \{(u-k)_{\pm}>0\}$,
$M_{\pm}(u,k,r):=\sup\limits_{B_{r}(x_{0})} (u-k)_{\pm}$
and $p(|x-x_{0}|):=L\dfrac{\log\log\frac{1}{|x-x_{0}|}}{\log\frac{1}{|x-x_{0}|}}, \quad L>0$.
}
\end{definition}
We refer to the parameters $c_{1}$, $n$, $p$, $q$ and $M$ as our structural
data, and we write $\gamma$ if it can be quantitatively determined a priory in terms of the above
quantities. The generic constant $\gamma$ may change from line to line.

Our main result of this Section reads as follows:
\begin{theorem}\label{th2.1}
Let $u\in D G(B_{R}(x_{0}))$, then $u$ is H\"{o}lder continuous at $x_{0}$.
\end{theorem}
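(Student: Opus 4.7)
The plan is to run the classical De~Giorgi program, but with the caveat that the ``energy'' side of the Caccioppoli inequality \eqref{eq2.1} carries the level-dependent weight $w_{k}(x):=\bigl(M_{\pm}(u,k,r)/r\bigr)^{p(|x-x_{0}|)}$. First I would verify that $w_{k}$ behaves on $B_{r}(x_{0})$ like a Muckenhoupt weight with a uniform constant. Since $\osc_{B_{r}(x_{0})}p(|x-x_{0}|)\leqslant c\,L\,\frac{\log\log(1/r)}{\log(1/r)}$ and $M_{\pm}(u,k,r)\leqslant 2M$, a direct computation shows
$$
\sup_{x,y\in B_{r}(x_{0})}\frac{w_{k}(x)}{w_{k}(y)}\leqslant \exp\!\Bigl(c\,L\,\log\tfrac{2M\vee r}{r}\cdot \tfrac{\log\log(1/r)}{\log(1/r)}\Bigr)\leqslant C,
$$
where the last inequality holds provided $r$ satisfies the smallness condition analogous to \eqref{eq1.10}. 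In particular $w_{k}\in A_{1}(B_{r}(x_{0}))$ and the weighted measure $w_{k}\,dx$ is doubling, with constants independent of $k$ and of the ball.

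With this at hand, step two is to pass from \eqref{eq2.1} to a reverse-H\"older style gain of integrability. Using the $A_{1}$-property and the associated weighted Sobolev--Poincar\'e inequality, one obtains for some $\kappa>1$
$$
\int_{B_{r(1-\sigma)}(x_{0})}w_{k}\Bigl(\tfrac{(u-k)_{\pm}}{r}\Bigr)^{\kappa p}dx
\leqslant \frac{C}{\sigma^{q\kappa}}\Bigl(\tfrac{M_{\pm}(u,k,r)}{r}\Bigr)^{\kappa p}\,w_{k}\!\left(A^{\pm}_{k,r}\right),
$$
and then, again by the Muckenhoupt property, the weighted measure $w_{k}(A^{\pm}_{k,r})$ can be replaced by a power of the Lebesgue ratio $|A^{\pm}_{k,r}|/|B_{r}|$. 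Step three is the standard De~Giorgi telescoping: with a sequence $k_{j}\uparrow \widetilde k$ approaching $\sup_{B_{r}}u$ and shrinking radii, the estimate above iterates to a nonlinear recursion for $|A^{+}_{k_{j},r_{j}}|$ whose smallness self-improves. This yields the key lemma: there exists $\nu\in(0,1)$, depending only on the structural data, such that
$$
\bigl|\{u>k\}\cap B_{r}(x_{0})\bigr|\leqslant \nu\,|B_{r}(x_{0})|\ \Longrightarrow\ (u-k)_{+}\leqslant \tfrac{1}{2}M_{+}(u,k,r)\ \text{on } B_{r/2}(x_{0}),
$$
together with its symmetric version for $(u-k)_{-}$.

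Step four is the oscillation reduction. Combining the De~Giorgi lemma with the standard measure-theoretic dichotomy (either $\{u>\tfrac{M^{+}+M^{-}}{2}\}$ or its complement occupies a definite fraction of $B_{r}$) gives $\osc_{B_{r/2}(x_{0})}u\leqslant \lambda\,\osc_{B_{r}(x_{0})}u$ for some $\lambda=\lambda(\text{data})\in(0,1)$, whenever $r$ is small enough to satisfy the smallness assumption used in step one. Iterating on a geometric sequence of radii produces a H\"older modulus at $x_{0}$. The main obstacle, and the place where the non-logarithmic character of $p(x)$ is decisive, is the uniform Muckenhoupt control in step one: because the weight contains the potentially large quantity $M_{\pm}/r$ raised to the slowly decaying exponent $p(|x-x_{0}|)$, the $A_{1}$-constant blows up unless the tension between $\log(M/r)$ and $\log\log(1/r)/\log(1/r)$ is broken by a condition of the precise form \eqref{eq1.10}. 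Everything else in the argument is a weighted version of well-known De~Giorgi manipulations, and the sharp threshold identified in this step is exactly what makes the subsequent iteration terminate with a genuine H\"older estimate rather than a weaker modulus.
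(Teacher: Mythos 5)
Your overall architecture (treat $w_{k}$ as a Muckenhoupt-type weight, derive a weighted Sobolev--Poincar\'e estimate, run a weighted De~Giorgi iteration, reduce oscillation) is the one the paper follows, but the argument fails at Step~1, and the failure is not cosmetic.

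You claim that $\sup_{x,y\in B_{r}(x_{0})}w_{k}(x)/w_{k}(y)$ stays bounded by a constant depending only on the data. Take $M_{\pm}(u,k,r)\sim M$ of order one. Since $p(s)=L\tfrac{\log\log(1/s)}{\log(1/s)}$ vanishes at $s=0$ and increases to $p(r)$ on $(0,r)$, the weight ranges from $(M/r)^{0}=1$ at the center to $(M/r)^{p(r)}$ near the boundary, and
$$
\Bigl(\tfrac{M}{r}\Bigr)^{p(r)}
=\exp\Bigl(L\,\tfrac{\log\log(1/r)}{\log(1/r)}\,\log\tfrac{M}{r}\Bigr)
\sim\exp\bigl(L\log\log\tfrac1r\bigr)
=\bigl(\log\tfrac1r\bigr)^{L}\longrightarrow\infty.
$$
The smallness condition \eqref{eq1.10} imposes an upper bound on $\rho$ but does not tame this ratio; it actually becomes more true as $r\to 0$, precisely when the ratio blows up. So $w_{k}$ is neither pointwise comparable to a constant nor $A_{1}$ with a uniform constant, and your chain ``$A_{1}$ $\Rightarrow$ doubling $\Rightarrow$ weighted Sobolev--Poincar\'e with fixed constants $\Rightarrow$ replace $w_{k}(A^{\pm}_{k,r})$ by a power of the Lebesgue ratio'' collapses at its first link.

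What the paper establishes instead (Lemma~\ref{lem2.1}) is a scaled $A_{1+1/t}$ condition, \eqref{eq2.2}--\eqref{eq2.3}, valid only under the restriction \eqref{eq2.4} which couples the admissible exponent $t$ to $r$. The proof is not a pointwise bound but an integration-by-parts argument using the monotonicity of $\log\log(1/s)/\log^{2}(1/s)$: the $s^{n-1}$ Jacobian concentrates the integral near $|x-x_{0}|\sim r$, so the $n$-dimensional averages of $w_{k}^{\pm t}$ behave like the boundary value $(M/r)^{\pm tp(r)}$ even though the weight itself swings over a factor $(\log 1/r)^{L}$. This averaged comparability is exactly the input needed for the weighted embedding (Lemma~\ref{lem2.2}), and it is also why the De~Giorgi lemma (Lemma~\ref{lem2.3}) must be formulated with the weighted measure of the super-level set, not the Lebesgue measure as in your statement; the two are not interchangeable here without further work. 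The conversion from a Lebesgue-measure hypothesis to a weighted-measure one is done via a separate expansion-of-positivity lemma (Lemma~\ref{lem2.5}) built on a weighted Poincar\'e inequality (Lemma~\ref{lem2.4}) and a summation over levels $k_{s}$, which also enforces the second half of the smallness condition, $L(s_{*}+1)\frac{\log\log(1/r)}{\log(1/r)}\leqslant 1$. This step is genuinely nontrivial and is where the precise non-logarithmic threshold enters; the ``standard measure-theoretic dichotomy'' you invoke does not come for free. To repair your argument you would need to replace the false pointwise bound by the averaged Muckenhoupt estimates of Lemma~\ref{lem2.1} and then redo the iteration in the weighted measure from the start.
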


We note that the solutions of equation \eqref{eq1.1} belong to the corresponding
$D G(B_{R}(x_{0}))$ classes, provided that $B_{2R}(x_{0}) \subset \Omega$.
We test identity \eqref{eq1.7} by $\varphi=(u-k)_{\pm}\zeta^{q}(x)$, by the Young inequality we obtain
\begin{equation*}
\int\limits_{A^{\pm}_{k,r}}|\nabla u|^{p(x)}\,dx \leqslant \gamma \,\int\limits_{A^{\pm}_{k,r}}\bigg(\frac{u-k}{\sigma\,r}\bigg)_{\pm}^{p(x)}\,dx\leqslant \gamma \sigma^{-\gamma}\bigg(\frac{M_{\pm}(u,k,r)}{ r}\bigg)^{p}
\int\limits_{A^{\pm}_{k,r}}\bigg(\frac{M_{\pm}(u,k,r)}{r} \bigg)^{p(|x-x_{0}|)}dx.
\end{equation*}
From this, using again the Young inequality
\begin{equation*}
\int\limits_{A^{\pm}_{k,r}}\bigg(\frac{M_{\pm}(u,k,r)}{r}\bigg)^{p(|x-x_{0}|)}|\nabla u|^{p}\,\zeta^{\,q}\,dx \leqslant
\int\limits_{A^{\pm}_{k,r}}|\nabla u|^{p(x)}\,dx +\int\limits_{A^{\pm}_{k,r}}\bigg(\frac{M_{\pm}(u,k,r)}{r}\bigg)^{p(x)}\,dx,
\end{equation*}
from which the required \eqref{eq2.1} follows.

\subsection{ Auxiliary Propositions}

For $k\in \mathbb{R}$ and $0<r< R$ set $w_{\pm}(x,u,k,r) := \bigg(\dfrac{M_{\pm}(u,k,r)}{r}\bigg)^{p(|x-x_{0}|)}$, further we need the following lemmas
\begin{lemma}\label{lem2.1}
There exists $C>0$ depending only on the data, such that for any $u\in D G(B_{R}(x_{0}))$ and for any $t>0$ the following inequalities hold
\begin{equation}\label{eq2.2}
r^{-n}\,\int\limits_{B_{r}(x_{0})} w_{\pm}(x,u,k,r)\,dx \,\,\,\bigg(r^{-n}\,\int\limits_{B_{r}(x_{0})}w_{\pm}^{-t}(x,u,k,r)\,dx\bigg)^{\frac{1}{t}}\leqslant \gamma^{1+\frac{1}{t}},
\end{equation}
\begin{equation}\label{eq2.3}
\bigg(r^{-n}\,\int\limits_{B_{r}(x_{0})}w_{\pm}^{1+t}(x,u,k,r)\,dx\bigg)^{\frac{1}{1+t}} \leqslant \gamma^{\frac{1}{1+t}+1}\,r^{-n}\,\int\limits_{B_{r}(x_{0})} w_{\pm}(x,u,k,r)\,dx,
\end{equation}
provided that
\begin{equation}\label{eq2.4}
\frac{1}{\log\log\frac{1}{r}} + t\,C\,L\,\frac{\log\log\frac{1}{r}}{\log\frac{1}{r}}\leqslant 1,\quad \text{and}\quad r\leqslant M_{\pm}(u,k,r) \leqslant 1.
\end{equation}
\end{lemma}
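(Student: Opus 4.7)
The plan is to pass to spherical coordinates and establish one sharp pointwise bound that converts the slow variation of $p(\rho)$ into a polynomial factor compatible with the volume weight. Setting $s := M_{\pm}(u,k,r)/r$ and $Y := \log(1/r)$, the hypothesis $r \leq M_{\pm} \leq 1$ gives $1 \leq s \leq 1/r$, i.e.\ $0 \leq \log s \leq Y$. Since $w_{\pm}(x) = s^{p(|x-x_{0}|)}$ is radial, the substitution $\rho = ry$ yields
$$
r^{-n}\int_{B_{r}(x_{0})} w_{\pm}^{a}(x)\, dx \;=\; n\omega_{n} \int_{0}^{1} s^{a\, p(ry)}\, y^{n-1}\, dy, \qquad a \in \mathbb{R},
$$
with $\omega_{n} = |B_{1}|$, and both (2.2), (2.3) reduce to controlling this one-dimensional integral for $a \in \{1,\, -t,\, 1+t\}$.

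The key pointwise inequality is
$$
s^{p(ry)} \;\geq\; y^{p(r)}\, s^{p(r)}, \qquad y \in (0,1).
$$
Writing $p(\rho) = L f(\log(1/\rho))$ with $f(\eta) = \log\eta/\eta$, the first term of (2.4) forces $Y \geq e$, so $f$ is decreasing on $[Y,\infty)$ and $|f'(\eta)| \leq \log\eta/\eta^{2}$ is itself non-increasing there. The mean value theorem gives $p(r) - p(ry) \leq L\log(1/y)\cdot \log Y/Y^{2}$; multiplying by $\log s \leq Y$ turns this into $(p(r) - p(ry))\log s \leq p(r)\log(1/y)$, which exponentiates to the claim. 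It is precisely the upper bound $M_{\pm} \leq 1$ that converts the non-logarithmic modulus of continuity of $p$ into a polynomial-in-$y$ factor absorbable by the polar weight $y^{n-1}$.

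From this pointwise inequality both moments follow. For $a \geq 0$, $s^{ap(ry)} \leq s^{ap(r)}$ gives the upper bound $\omega_{n} s^{ap(r)}$, while the pointwise bound gives the matching lower bound $n\omega_{n}(n+ap(r))^{-1} s^{ap(r)}$. For $a = -t$, since $s^{-tp(ry)} \leq y^{-tp(r)} s^{-tp(r)}$,
$$
r^{-n}\int_{B_{r}(x_{0})} w_{\pm}^{-t}(x)\, dx \;\leq\; \frac{n\omega_{n}}{n - tp(r)}\, s^{-tp(r)},
$$
provided $tp(r) < n$; the second term of (2.4) reads $tCp(r) \leq 1$, so choosing the structural constant $C \geq 2/n$ yields $n - tp(r) \geq n/2$ and the right-hand side is at most $2\omega_{n} s^{-tp(r)}$. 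Substituting these moment bounds into (2.2), the factors $s^{p(r)}$ and $s^{-p(r)}$ cancel and leave a constant of the form $\gamma^{1+1/t}$, while (2.3) reduces to $(r^{-n}\int w_{\pm}^{1+t})^{1/(1+t)} \leq \omega_{n}^{1/(1+t)} s^{p(r)} \leq (1 + p(r)/n)\, r^{-n}\int w_{\pm}$, with multiplicative factor bounded by $1 + 1/(nC)$. The main technical obstacle is the pointwise inequality of the second paragraph; once it is in place, the rest is a direct computation, and the fact that the constants in (2.2)--(2.3) are independent of $L$ and of $\log\log(1/r)$ reflects the tight cancellation produced by the bound $M_{\pm} \leq 1$.
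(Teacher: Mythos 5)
Your proof is correct and arrives at the same moment bounds as the paper, namely that $r^{-n}\int_{B_r} w_{\pm}^{a}\,dx$ is comparable to $(M_{\pm}/r)^{ap(r)}$ for the relevant exponents $a$, but by a genuinely different route. The paper establishes the ``hard'' halves of \eqref{eq2.5}--\eqref{eq2.6} by integration by parts in the radial variable, exploiting the monotonicity of $\sigma\mapsto \sigma\,p'(\sigma)$ on $(0,r)$ and absorbing the resulting term via the smallness condition $\gamma\,t\,L\,\tfrac{\log\log(1/r)}{\log(1/r)}\leqslant\tfrac{1}{2}$; your argument instead reduces everything to the single pointwise inequality $s^{p(ry)}\geqslant y^{p(r)}s^{p(r)}$, proved by MVT applied to $f(\eta)=\log\eta/\eta$, after which the one-dimensional integrals are elementary. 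The two proofs exploit the same structural facts (the sign and slow decay of $p'$, and the crucial cancellation $\log(M_{\pm}/r)\cdot\tfrac{\log\log(1/r)}{\log^{2}(1/r)}\leqslant p(r)$ coming from $M_{\pm}\leqslant 1$), but your pointwise estimate makes the role of $M_{\pm}\leqslant 1$ more transparent and avoids the bootstrap step; on the other hand, the paper's integration by parts gives both sides of \eqref{eq2.5} and \eqref{eq2.6} in a uniform fashion, whereas you treat the easy and hard directions separately (which is fine, since both are needed). One small imprecision: for the multiplicative constant in \eqref{eq2.3} you quote the factor $\tfrac{n+p(r)}{n}\leqslant 1+\tfrac{1}{nC}$, which implicitly uses $p(r)\leqslant 1/C$; this follows from \eqref{eq2.4} only when $t\geqslant 1$. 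For general $t>0$ you should instead invoke the unconditional bound $p(r)=L\,f(\log(1/r))\leqslant L/e$, which holds because $f$ attains its maximum $1/e$ at $\eta=e$, and $L$ is part of the structural data. This is a matter of bookkeeping rather than a gap in the argument.
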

\begin{proof}
To prove inequalities \eqref{eq2.2}, \eqref{eq2.3} we just need to check
\begin{equation}\label{eq2.5}
\gamma^{-1}\bigg(\frac{M_{\pm}(u,k,r)}{r}\bigg)^{-tp(r)}\leqslant r^{-n}\,\int\limits_{B_{r}(x_{0})} w_{\pm}^{-t}(x,u, k, r)\,dx\leqslant
\gamma \bigg(\frac{M_{\pm}(u,k,r)}{r}\bigg)^{-tp(r)},\quad  t>0,
\end{equation}
\begin{equation}\label{eq2.6}
\gamma^{-1}\bigg(\frac{M_{\pm}(u,k,r)}{r}\bigg)^{tp(r)}\leqslant r^{-n}\,\int\limits_{B_{r}(x_{0})} w_{\pm}^{t}(x,u, k, r)\,dx\leqslant
\gamma \bigg(\frac{M_{\pm}(u,k,r)}{r}\bigg)^{tp(r)},\quad  t>0.
\end{equation}
The left inequality in \eqref{eq2.5} and the right inequality in \eqref{eq2.6} are obvious due to the fact that $p(|x-x_{0}|)$ is increasing
if $x\in B_{r}(x_{0})$ and $r$ is sufficiently small. Let us check the right inequality in \eqref{eq2.5}. Integrating by parts, using the fact that $\dfrac{\log\log\frac{1}{s}}{\log^{2}\frac{1}{s}}$ is increasing on the interval $(0,r)$, we obtain
\begin{multline*}
\int\limits_{B_{r}(x_{0})} w_{\pm}^{-t}(x,u, k, r)\,dx= \gamma \int\limits_{0}^{r}\bigg(\frac{M_{\pm}(u,k,r)}{r}\bigg)^{-tp(s)}\,s^{n-1}\,ds
\leqslant\gamma r^{n}\,\bigg(\frac{M_{\pm}(u,k,r)}{r}\bigg)^{-tp(r)} +\\+\gamma t\,L\, \log\frac{M_{\pm}(u,k,r)}{r}\,\int\limits_{0}^{r}\bigg(\frac{M_{\pm}(u,k,r)}{r}\bigg)^{-tp(s)}\,\frac{\log\log\frac{1}{s}}{\log^{2}\frac{1}{s}}\,s^{n-1}\,ds \leqslant \\ \leqslant
\gamma r^{n}\,\bigg(\frac{M_{\pm}(u,k,r)}{r}\bigg)^{-tp(r)} +\gamma\,t\,L\,\log\frac{M_{\pm}(u,k,r)}{r}\frac{\log\log\frac{1}{r}}{\log^{2}\frac{1}{r}}\int\limits_{0}^{r}\bigg(\frac{M_{\pm}(u,k,r)}{r}\bigg)^{-tp(s)}\,s^{n-1}\,ds \leqslant \\ \leqslant
\gamma r^{n}\,\bigg(\frac{M_{\pm}(u,k,r)}{r}\bigg)^{-tp(r)} +\frac{1}{2}\,\int\limits_{B_{r}(x_{0})} w_{\pm}^{-t}(x,u,k, r)\,dx,
\end{multline*}
provided that $\gamma\,t\,L\, \dfrac{\log\log\frac{1}{r}}{\log\frac{1}{r}}\leqslant \dfrac{1}{2},$ $r \leqslant M_{\pm}(u, k, r) \leqslant1$, from which the required inequality follows.

Similarly,
\begin{multline*}
\int\limits_{B_{r}(x_{0})} w_{\pm}^{t}(x,u, k, r)\,dx \geqslant \gamma r^{n}\,\bigg(\frac{M_{\pm}(u,k,r)}{r}\bigg)^{tp(r)}- \\ -\gamma t\,L\, \log\frac{M_{\pm}(u,k,r)}{r}\,\int\limits_{0}^{r}\bigg(\frac{M_{\pm}(u,k,r)}{r}\bigg)^{tp(s)}\,\frac{\log\log\frac{1}{s}}{\log^{2}\frac{1}{s}}\,s^{n-1}\,ds
\geqslant\\ \geqslant \gamma r^{n}\,\bigg(\frac{M_{\pm}(u,k,r)}{r}\bigg)^{tp(r)}\bigg(1-\gamma t\,L\,\dfrac{\log\log\frac{1}{r}}{\log\frac{1}{r}}\bigg),
\end{multline*}
from which the left inequality in \eqref{eq2.6} follows, provided that  $\gamma\,t\,L\, \dfrac{\log\log\frac{1}{r}}{\log\frac{1}{r}}\leqslant \dfrac{1}{2},$ \\$r\leqslant M_{\pm}(u, k, r)\leqslant1$, which completes the proof of  the lemma.
\end{proof}

In the sequel we also need the following lemma
\begin{lemma}\label{lem2.2}
There exist $C_{1}>0$, $\kappa_{1} >1 $ such that for any $u\in D G(B_{R}(x_{0}))$ and any \\$\varphi \in W_{0}(B_{r}(x_{0}))$ the following inequality holds
\begin{multline}\label{eq2.7}
\frac{1}{w^{\pm}_{u,k,r}(B_{r}(x_{0}))} \int\limits_{B_{r}(x_{0})} w_{\pm}(x,u,k,r) |\varphi|^{\kappa_{1} p} \,dx \leqslant \\
\leqslant \gamma \bigg(r^{p}\,\frac{1}{w^{\pm}_{u,k,r}(B_{r}(x_{0}))} \int\limits_{B_{r}(x_{0})} w_{\pm}(x,u,k,r) |\nabla \varphi|^{p}\,dx \bigg)^{\kappa_{1}} ,
\end{multline}
provided that
\begin{equation}\label{eq2.8}
\frac{1}{\log\log\frac{1}{r}} + C_{1}\,L\,\frac{\log\log\frac{1}{r}}{\log\frac{1}{r}}\leqslant 1,\quad \text{and}\quad r\leqslant M_{\pm}(u,k,r) \leqslant 1.
\end{equation}
Here
$$w^{\pm}_{u,k,r}(F) := \int\limits_{F}w_{\pm}(x,u,k,r)\,dx ,\quad F\subset \mathbb{R}^{n}.$$
\end{lemma}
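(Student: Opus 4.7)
The plan is to identify the weight $w_{\pm}(x,u,k,r) = \bigl(M_{\pm}(u,k,r)/r\bigr)^{p(|x-x_{0}|)}$ as a Muckenhoupt $A_{p}$-weight with constant depending only on the structural data, and then to invoke the classical weighted Sobolev--Poincar\'e inequality for $A_{p}$-weights.

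First I would verify the $A_{p}$-Muckenhoupt condition for $w_{\pm}$ by specializing inequality \eqref{eq2.2} of Lemma \ref{lem2.1} with the choice $t = 1/(p-1)$. This yields
\begin{equation*}
\Bigl(r^{-n}\int\limits_{B_{r}(x_{0})} w_{\pm}\,dx\Bigr)\Bigl(r^{-n}\int\limits_{B_{r}(x_{0})} w_{\pm}^{-1/(p-1)}\,dx\Bigr)^{p-1} \leqslant \gamma^{p},
\end{equation*}
which is exactly the $A_{p}$-Muckenhoupt condition for $w_{\pm}$ on $B_{r}(x_{0})$, with constant controlled by the data. The smallness hypothesis \eqref{eq2.4} with this value of $t$ translates into the condition \eqref{eq2.8} via $C_{1}=C/(p-1)$.

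Since $[w_{\pm}]_{A_{p}}$ is bounded in terms of the data, the classical weighted Sobolev--Poincar\'e inequality of Fabes--Kenig--Serapioni (see, e.g., Heinonen--Kilpel\"ainen--Martio, \emph{Nonlinear Potential Theory of Degenerate Elliptic Equations}) supplies $\kappa_{1}>1$ and $C_{0}>0$, depending only on $n$, $p$ and $[w_{\pm}]_{A_{p}}$, such that for every $\varphi\in C_{0}^{\infty}(B_{r}(x_{0}))$,
\begin{equation*}
\Bigl(\frac{1}{w^{\pm}_{u,k,r}(B_{r}(x_{0}))}\int\limits_{B_{r}(x_{0})} |\varphi|^{\kappa_{1}p}\, w_{\pm}\,dx\Bigr)^{\frac{1}{\kappa_{1}p}} \leqslant C_{0}\, r\,\Bigl(\frac{1}{w^{\pm}_{u,k,r}(B_{r}(x_{0}))}\int\limits_{B_{r}(x_{0})} |\nabla \varphi|^{p}\, w_{\pm}\,dx\Bigr)^{1/p}.
\end{equation*}
Raising both sides to the power $\kappa_{1}p$ produces \eqref{eq2.7}. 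To pass from $C_{0}^{\infty}$ to arbitrary $\varphi\in W_{0}(B_{r}(x_{0}))$, I would use the two-sided pointwise bound $1 \leqslant w_{\pm}(x) \leqslant (M_{\pm}/r)^{p(r)}$ valid on $B_{r}(x_{0})$: this embeds $W_{0}(B_{r}(x_{0}))$ continuously into the weighted space $W_{0}^{1,p}(B_{r}(x_{0}),\, w_{\pm}\,dx)$, in which $C_{0}^{\infty}$ is dense.

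The main obstacle is the first step: ensuring that the $A_{p}$-constant of $w_{\pm}$ does not depend on $r$, $u$, or $k$. Lemma \ref{lem2.1} achieves exactly this uniformity under the smallness assumption \eqref{eq2.8}; once this is in hand, the proof is a direct invocation of classical Muckenhoupt-weighted Sobolev theory, and the exponent $\kappa_{1}$ and constant $\gamma$ depend only on the data as required.
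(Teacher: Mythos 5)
Your proposal takes a genuinely different route from the paper: you aim to recognize $w_{\pm}$ as a Muckenhoupt weight and then quote the Fabes--Kenig--Serapioni weighted Sobolev--Poincar\'e inequality, whereas the paper's proof is self-contained, using only H\"{o}lder's inequality, the classical \emph{unweighted} Sobolev inequality, and the integral estimates \eqref{eq2.5}--\eqref{eq2.6} of Lemma~\ref{lem2.1} for carefully chosen exponents $\delta$, $\kappa_{1}$, $t$. The difference matters because it is precisely where a gap appears in your argument.

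The gap is in the first step. Taking $t=1/(p-1)$ in \eqref{eq2.2} gives the Muckenhoupt \emph{ratio test for the single ball $B_{r}(x_{0})$}, but this is not the $A_{p}$ condition: $[w_{\pm}]_{A_{p}}$ is the supremum of that ratio over \emph{all} balls (or over all sub-balls of $B_{r}(x_{0})$ in the local version), and the weighted Poincar\'e inequality you cite requires control of this full supremum. Lemma~\ref{lem2.1} only addresses balls centered at $x_{0}$, exploiting the radial structure of $w_{\pm}(x)=\bigl(M_{\pm}/r\bigr)^{p(|x-x_{0}|)}$ through an integration by parts; it says nothing about balls $B'\subset B_{r}(x_{0})$ that are off-center or straddle $x_{0}$. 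While it is plausible that $w_{\pm}$ is in fact uniformly $A_{p}$ under \eqref{eq2.8} (the weight oscillates slowly on balls away from $x_{0}$, and balls containing $x_{0}$ could be handled by a variant of Lemma~\ref{lem2.1}), this requires an additional argument that you have not supplied and that the paper deliberately avoids. The paper's route — first H\"{o}lder to separate $w_{\pm}$ from $|\varphi|^{\kappa_{1}p}$, then unweighted Sobolev on $\varphi$, then a second H\"{o}lder to reinsert the weight, finally invoking \eqref{eq2.5}--\eqref{eq2.6} — uses only information about integrals of powers of $w_{\pm}$ over the single ball $B_{r}(x_{0})$, which is exactly what Lemma~\ref{lem2.1} provides, and therefore closes the argument without any claim about the global $A_{p}$ constant.
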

\begin{proof}
Inequality \eqref{eq2.7} is a consequence of \eqref{eq2.5}, \eqref{eq2.6} and Sobolev embedding theorem. Indeed, using the H\"{o}lder inequality, if $0 < \delta < \dfrac{p}{p+n}$ and $1 <\kappa_{1} <\dfrac{n(1-\delta)}{n-p(1-\delta)}$, we obtain with $t=\dfrac{n(1-\delta)}{n(1-\delta)-\kappa_{1}(n-p(1-\delta))}>1$
\begin{multline*}
\int\limits_{B_{r}(x_{0})} w_{\pm}(x,u,k,r) |\varphi|^{\kappa_{1} p} \,dx \leqslant \bigg(\int\limits_{B_{r}(x_{0})} w_{\pm}^{t}(x,u,k,r)\,dx\bigg)^{\frac{1}{t}}\bigg(\int\limits_{B_{r}(x_{0})} |\varphi|^{\frac{np(1-\delta)}{n-p(1-\delta)}}\,dx\bigg)^{\kappa_{1}\frac{n-p(1-\delta)}{n(1-\delta)}}\leqslant \\ \leqslant \gamma \bigg(\int\limits_{B_{r}(x_{0})} w_{\pm}^{t}(x,u,k,r)\,dx\bigg)^{\frac{1}{t}}\bigg(\int\limits_{B_{r}(x_{0})} |\nabla \varphi|^{p(1-\delta)}\,dx\bigg)^{\frac{\kappa_{1}}{1-\delta}}\leqslant \\ \leqslant \gamma\bigg(\int\limits_{B_{r}(x_{0})} w_{\pm}^{t}(x,u,k,r)\,dx\bigg)^{\frac{1}{t}} \bigg(\int\limits_{B_{r}(x_{0})} w_{\pm}^{-\frac{1-\delta}{\delta}}(x,u,k,r)\,dx\bigg)^{\frac{\kappa_{1}\delta}{1-\delta}}\bigg(\int\limits_{B_{r}(x_{0})} w_{\pm}(x,u,k,r) |\nabla \varphi|^{p}\,dx\bigg)^{\kappa_{1}}\\ \leqslant \gamma r^{p\kappa_{1}}
\bigg(\int\limits_{B_{r}(x_{0})} w_{\pm}(x,u,k,r)\,dx\bigg)^{1-\kappa_{1}}\,\bigg(\int\limits_{B_{r}(x_{0})} w_{\pm}(x,u,k,r) |\nabla \varphi|^{p}\,dx\bigg)^{\kappa_{1}}.
\end{multline*}
Choosing $C_{1}=t\,C$, we arrive at the required \eqref{eq2.7}, which completes the proof of the lemma.
\end{proof}

\subsection{De Giorgi Type Lemma}
Let $B_{8r}(x_{0})\subset B_{R}(x_{0})$ and let
$\mu^{+}_{r}\geqslant \esssup\limits_{B_{r}(x_{0})}u, \quad
\mu^{-}_{r}\leqslant \essinf\limits_{B_{r}(x_{0})}u, \quad
\omega_{r}:=\mu^{+}_{r}-\mu^{-}_{r}$.

\begin{lemma}\label{lem2.3}
Let $u\in D G(B_{R}(x_{0}))$ and fix $\xi\in(0,\dfrac{1}{2M})$.
Then there exists $\nu\in(0,1)$ depending only on
$n$, $p$, $q$, $c_{1}$ and $M$, such that if
\begin{equation}\label{eq2.9}
w^{+}_{u,\mu_r^+-\xi\omega_{r},r}\bigg(B_{r}(x_{0}):u \geqslant \mu_r^+-\xi\omega_{r}\bigg) \leqslant \nu\,w^{+}_{u,\mu_r^+-\xi\omega_{r},r}(B_{r}(x_{0})),
\end{equation}
then either
\begin{equation}\label{eq2.10}
\xi\,\omega_{r}\leqslant 4\,r,
\end{equation}
or
\begin{equation}\label{eq2.11}
u(x)\leqslant \mu_r^+ -\frac{\xi}{4}\,\omega_{r}
\quad \text{for a.a.} \ x\in B_{\frac{r}{2}}(x_{0}),
\end{equation}
provided that
\begin{equation}\label{eq2.12}
\frac{1}{\log\log\frac{1}{r}} + C_{1}\,L\,\frac{\log\log\frac{1}{r}}{\log\frac{1}{r}}\leqslant 1,
\end{equation}
where $C_{1}$ is the constant defined in Lemma \ref{lem2.2}.

Likewise, if
\begin{equation}\label{eq2.13}
w^{-}_{u,\mu_r^{-}+\xi\omega_{r},r}\bigg(B_{r}(x_{0}):u \leqslant \mu_r^{-}+\xi\omega_{r}\bigg) \leqslant \nu\,w^{-}_{u,\mu_r^{-}+\xi\omega_{r},r}(B_{r}(x_{0})),
\end{equation}
then either \eqref{eq2.10} holds, or
\begin{equation}\label{eq2.14}
u(x)\geqslant \mu_r^{-} +\frac{\xi}{4}\,\omega_{r}
\quad \text{for a.a.} \ x\in B_{\frac{r}{2}}(x_{0}),
\end{equation}
provided that  \eqref{eq2.12} is valid.
\end{lemma}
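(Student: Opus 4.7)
I treat the first alternative (\eqref{eq2.9} $\Rightarrow$ \eqref{eq2.11}); the other one follows by applying the argument to $-u$. The strategy is De Giorgi's iteration, with the Lebesgue measure replaced by the weighted measure $w_+(x,u,k,r)\,dx$.

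Assuming \eqref{eq2.10} fails, so $\xi\omega_r>4r$, set $r_j=r/2+r/2^{j+1}$ and $k_j=\mu_r^+-\xi\omega_r\bigl(\tfrac14+\tfrac34\cdot 2^{-j}\bigr)$, so that $(r_0,k_0)=(r,\mu_r^+-\xi\omega_r)$ and $(r_j,k_j)\to(r/2,\mu_r^+-\xi\omega_r/4)$. Write $M_j=M_+(u,k_j,r_j)$, $w_j(x)=w_+(x,u,k_j,r_j)$ and $A_j=B_{r_j}(x_0)\cap\{u\geqslant k_j\}$. From $\xi\leqslant 1/(2M)$ and $\omega_r\leqslant 2M$ we have $M_j\leqslant\xi\omega_r\leqslant 1$, and one may assume throughout that $r_j\leqslant M_j$ (otherwise $\sup_{B_{r_j}}u<k_j+r_j$ is so strong that it can be fed directly into the conclusion). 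Moreover the two-sided estimate $\xi\omega_r/4\leqslant M_j\leqslant\xi\omega_r$ makes the weights $w_j$ pointwise comparable with a constant depending only on the data, so that Lemmas \ref{lem2.1} and \ref{lem2.2} apply uniformly at each scale under condition \eqref{eq2.12}.

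Setting $Y_j=w_j(A_j)/w_j(B_{r_j}(x_0))$, the hypothesis \eqref{eq2.9} gives $Y_0\leqslant\nu$. The core of the proof is a recursion
$$Y_{j+1}\leqslant\gamma\,b^{\,j}\,Y_j^{\kappa_1},\qquad b>1,\ \kappa_1>1,$$
derived as follows: pick a cutoff $\zeta_j\in C_0^\infty(B_{r_j}(x_0))$ equal to $1$ on $B_{r_{j+1}}(x_0)$ with $|\nabla\zeta_j|\leqslant\gamma\,2^j/r$; apply the weighted energy estimate \eqref{eq2.1} with parameters $(k_j,r_j,\sigma_j=2^{-j-2})$ to bound $\int w_j|\nabla(u-k_j)_+|^p\zeta_j^q\,dx$; plug into the weighted Sobolev inequality \eqref{eq2.7} applied to $\varphi=(u-k_j)_+\zeta_j\in W_0(B_{r_j}(x_0))$; restrict the resulting weighted $L^{\kappa_1 p}$ bound on $\varphi$ to $A_{j+1}\subset B_{r_{j+1}}(x_0)$, where $(u-k_j)_+\geqslant k_{j+1}-k_j=\tfrac34\xi\omega_r\cdot 2^{-j-1}$; and finally combine with $M_j\leqslant\xi\omega_r$ and the pointwise comparability of $w_j$ and $w_{j+1}$.

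The classical fast-geometric-convergence lemma of De Giorgi then yields $\nu=\nu(n,p,q,c_1,M)\in(0,1)$ small enough that $Y_0\leqslant\nu$ implies $Y_j\to 0$, whence $|B_{r/2}(x_0)\cap\{u>\mu_r^+-\xi\omega_r/4\}|=0$, which is \eqref{eq2.11}. The principal obstacle is the stage-dependence of the weights $w_j$ themselves: the recursion closes only via their pointwise comparability, which in turn rests on the two-sided control $\xi\omega_r/4\leqslant M_j\leqslant\xi\omega_r$ (secured by the negation of \eqref{eq2.10}) and on the Muckenhoupt-type bounds of Lemma \ref{lem2.1}, both hinging on the precise non-logarithmic condition \eqref{eq2.12}.
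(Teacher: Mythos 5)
Your overall architecture — a De Giorgi iteration in the weighted measure $w_+\,dx$, driven by the Caccioppoli estimate \eqref{eq2.1} and the weighted Sobolev inequality of Lemma~\ref{lem2.2}, with the recursion closed via pointwise comparability of the stage-dependent weights — is exactly the paper's approach, and your derivation of a recursion $Y_{j+1}\leqslant \gamma b^j Y_j^{\kappa_1}$ (the paper obtains exponent $2-1/\kappa_1$; both are $>1$ and serve equally well) is correct.

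However, there is a genuine gap in how you justify the crucial two-sided bound $\xi\omega_r/4 \leqslant M_j \leqslant \xi\omega_r$. You assert the lower bound is ``secured by the negation of \eqref{eq2.10}''; it is not. The negation of \eqref{eq2.10} gives $\xi\omega_r > 4r$, which controls $r$, not $M_j = M_+(u,k_j,r_j)$. Your fallback device (``assume $r_j\leqslant M_j$'') only yields $M_j\geqslant r/2$, and that is not good enough: with $M_0\leqslant \xi\omega_r$ one then only controls $M_0/M_j\lesssim \xi\omega_r/r$, and the induced weight ratio
\begin{equation*}
\biggl(\frac{M_0/r_0}{M_j/r_j}\biggr)^{p(|x-x_0|)}\leqslant \biggl(\frac{\xi\omega_r}{r}\biggr)^{p(r)}\leqslant \exp\Bigl(L\,\tfrac{\log\log\frac1r}{\log\frac1r}\log\tfrac1r\Bigr)=\bigl(\log\tfrac1r\bigr)^{L}
\end{equation*}
is unbounded as $r\to 0$, so the weights are not uniformly comparable and the recursion does not close. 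The genuine lower bound $M_j\geqslant \frac{\xi}{4}\omega_r$ requires the paper's standing assumption $\sup_{B_{r/2}(x_0)}(u-k_\infty)_+\geqslant\frac{\xi}{4}\omega_r$, and this assumption must have a non-trivial fallback. With your choice $k_\infty=\mu_r^+-\frac{\xi}{4}\omega_r$ the fallback only gives $u<\mu_r^+$, which is vacuous, so you cannot make that assumption. The fix is to stop the levels one dyadic step short of the conclusion, e.g.\ $k_\infty=\mu_r^+-\frac{\xi}{2}\omega_r$ as in the paper: then the fallback (failure of $\sup_{B_{r/2}}(u-k_\infty)_+\geqslant\frac{\xi}{4}\omega_r$) yields $u<k_\infty+\frac{\xi}{4}\omega_r=\mu_r^+-\frac{\xi}{4}\omega_r$ on $B_{r/2}$, i.e.\ \eqref{eq2.11}, while if the assumption holds one obtains the quantitative lower bound $M_j\geqslant \frac{\xi}{4}\omega_r$ that is actually needed for the weight comparability and hence for the iteration.
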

\begin{proof}
We provide the proof of \eqref{eq2.11}, while the proof of \eqref{eq2.14}
is completely similar.
For $j=0,1,2,\ldots$ we set $r_{j}:=\dfrac{r}{2}(1+2^{-j})$,
$k_{j}:=\mu^{+}_{r}- \dfrac{\xi}{2}\,\omega_{r}-
\xi\,\omega_{r}2^{-j-1}$,  let $\zeta_{j}(x)\in C^{\infty}_{0}(B_{r_{j}}(x_{0})),\\ 0\leqslant \zeta_{j}(x)\leqslant 1,
\zeta_{j}(x)=1$ for $x\in B_{r_{j+1}}(x_{0})$ and set $A_{j}:=B_{r_{j}}(x_{0})\cap\{u \geqslant k_{j}\}$. Further we will assume that $\sup\limits_{B_{\frac{r}{2}}(x_{0})}(u-k_{\infty})_{+} \geqslant \dfrac{\xi}{4}
\omega_{r}$, because otherwise inequality \eqref{eq2.11} is evident. We note that
\begin{equation*}
\gamma^{-1}\,w_{+}(x,u,k_{j},r_{j})\leqslant  w_{+}(x,u,\mu_r^+-\xi\omega_{r},r)\leqslant \gamma w_{+}(x,u,k_{j},r_{j}), \quad x\in B_{r_{j}}(x_{0}).
\end{equation*}
If \eqref{eq2.10} is violated, then condition \eqref{eq2.8} holds due to \eqref{eq2.12} and the choice of $\xi$, \\$r_{j}\leqslant \sup\limits_{B_{r_{j}}}(u-k_{j})_{+}\leqslant 1$. So, by Lemma \ref{lem2.2} and inequality \eqref{eq2.1} we have
\begin{multline*}
(k_{j} - k_{j+1})^{p}\,w^{+}_{u,\mu_r^+-\xi\omega_{r},r}( A_{j+1})\leqslant \gamma  2^{j\gamma} \, \int\limits_{B_{r_{j}}(x_{0})} w_{+}(x,u, k_{j}, r_{j})\,(u-k_{j})^{p}_{+}\zeta^{q}_{j}\,dx\leqslant\\ \leqslant \,\gamma 2^{j\gamma}\bigg(\int\limits_{B_{r_{j}}(x_{0})} w_{+}(x,u,k_{j}, r_{j})\,\big((u-k_{j})^{p}_{+}\zeta^{q}_{j}\big)^{\kappa_{1}}\,dx\bigg)^{\frac{1}{\kappa_{1}}} \big[w^{+}_{u,\mu_r^+-\xi\omega_{r},r}(A_{j})\big]^{1-\frac{1}{\kappa_{1}}} \leqslant \\ \leqslant \gamma\,2^{j\gamma} \big[w^{+}_{u,\mu_r^+-\xi\omega_{r},r}(B_{r}(x_{0}))\big]^{\frac{1}{\kappa_{1}}-1}\,r^{p}\,\int\limits_{B_{r_{j}}(x_{0})} w_{+}(x,u, k_{j},r_{j}) |\nabla\big((u-k_{j})_{+}\zeta^{q}_{j}\big)|^{p}\,dx\,\times\\ \times\big[w^{+}_{u,\mu_r^+-\xi\omega_{r},r}(A_{j})\big]^{1-\frac{1}{\kappa_{1}}} \leqslant \gamma \, 2^{j\gamma}\,\big(\xi\omega_{r}\big)^{p}\,\big[w^{+}_{u,\mu_r^+-\xi\omega_{r},r}(B_{r}(x_{0}))\big]^{\frac{1}{\kappa_{1}}-1}\,\big[w^{+}_{u,\mu_r^+-\xi\omega_{r},r}(A_{j})\big]^{2-\frac{1}{\kappa_{1}}},
\end{multline*}
which implies
\begin{equation*}
y_{j+1}:=\frac{w^{+}_{u,\mu_r^+-\xi\omega_{r},r}(A_{j+1})}{w^{+}_{u,\mu_r^+-\xi\omega_{r},r}(B_{r}(x_{0}))}\leqslant \gamma 2^{j\gamma} y_{j}^{2-\frac{1}{\kappa_{1}}},
\end{equation*}
from which by standard arguments (see e.g. \cite{LadUr}) the required \eqref{eq2.11} follows, provided that $\nu$ is chosen to satisfy $\nu\leqslant \gamma^{-1}$.
This completes the proof of the lemma.
\end{proof}
%%%%%%%%%%%%%%%%%%%%%%%%%%%%%%%%%%%%%%%%%%%%%%%%%%%%%%%%%%%%%%%%%%%%%%%%%%%%%%%%%%%%%%%%%%%%%%%%%%%%%%%%%%%%%%%%%%%%%%%
\subsection{Expansion of the Positivity}
To prove our next result we need the following lemma.
\begin{lemma}\label{lem2.4}
Let  $k < l$, $0< \delta <1-\frac{1}{p}$, $u \in D G(B_{R}(x_{0}))$, $\varphi\in W^{1,p(1-\delta)}(B_{r}(x_{0}))$, then
\begin{equation}\label{eq2.15}
(l-k)\,\frac{w^{+}_{u,l,r}(A^{+}_{l,r})}{w^{+}_{u,l,r}(B_{r}(x_{0}))}\,\, \frac{|A^{-}_{k,r}|}{|B_{r}(x_{0})|}\leqslant
\gamma r^{1-\frac{n}{p(1-\delta)}}\bigg(\int\limits_{A^{+}_{k,r}\setminus A^{+}_{l,r}}|\nabla \varphi|^{p(1-\delta)}\,dx\bigg)^{\frac{1}{p(1-\delta)}},
\end{equation}
\begin{equation}\label{eq2.16}
(l-k)\,\frac{w^{-}_{u,k,r}(A^{-}_{k,r})}{w^{-}_{u,k,r}(B_{r}(x_{0}))}\,\, \frac{|A^{+}_{l,r}|}{|B_{r}(x_{0})|}\leqslant
\gamma r^{1-\frac{n}{p(1-\delta)}}\bigg(\int\limits_{A^{-}_{l,r}\setminus A^{-}_{k,r}}|\nabla \varphi|^{p(1-\delta)}\,dx\bigg)^{\frac{1}{p(1-\delta)}},
\end{equation}
provided that
\begin{equation}\label{2.17}
\frac{1}{\log\log\frac{1}{r}} + \frac{L\,C}{p(1-\delta)-1}\,\frac{\log\log\frac{1}{r}}{\log\frac{1}{r}}\leqslant 1,\quad \text{and}\quad r\leqslant M_{+}(u,l,r), M_{-}(u,k,r)\leqslant 1,
\end{equation}
here $C>1$ is the constant, defined in Lemma \ref{lem2.1}.
\end{lemma}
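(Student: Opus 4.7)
I would prove \eqref{eq2.15} in detail; \eqref{eq2.16} follows by the symmetric argument with $w_-$ replacing $w_+$ and the roles of the super- and sub-level sets of $u$ interchanged. The overall strategy is to combine the classical De\,Giorgi two-set Poincaré estimate with the Muckenhoupt-type control of the weight $w_+$ supplied by Lemma~\ref{lem2.1}.

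I would first consider the truncation $\psi(x):=\min\{(\varphi(x)-k)_{+},\,l-k\}$, which vanishes on $A^{-}_{k,r}$, equals $l-k$ on $A^{+}_{l,r}$, and satisfies $|\nabla\psi|\leq|\nabla\varphi|\,\chi_{A^{+}_{k,r}\setminus A^{+}_{l,r}}$. The classical De\,Giorgi isoperimetric inequality applied to $\psi$ then yields
\[
(l-k)\,\frac{|A^{+}_{l,r}|}{|B_{r}(x_{0})|}\cdot\frac{|A^{-}_{k,r}|}{|B_{r}(x_{0})|}\;\leq\;\gamma\,r\int_{A^{+}_{k,r}\setminus A^{+}_{l,r}}|\nabla\varphi|\,dx,
\]
and Hölder's inequality with exponents $p(1-\delta)$ and its conjugate raises the gradient exponent on the right from $1$ to $p(1-\delta)$, producing the factor $r^{\,1-n/[p(1-\delta)]}$.

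It remains to replace the Lebesgue ratio $|A^{+}_{l,r}|/|B_{r}(x_{0})|$ on the left by the weighted ratio $w^{+}_{u,l,r}(A^{+}_{l,r})/w^{+}_{u,l,r}(B_{r}(x_{0}))$ appearing in \eqref{eq2.15}. Since $p(|x-x_{0}|)$ is radial and increasing on $B_{r}(x_{0})$ and $M_{+}(u,l,r)/r\geq 1$ under \eqref{2.17}, the weight enjoys the pointwise upper bound $w_{+}(x,u,l,r)\leq(M_{+}(u,l,r)/r)^{p(r)}$ on $B_{r}(x_{0})$, while Lemma~\ref{lem2.1} (applied with $t=1$) supplies the matching averaged lower bound $w^{+}_{u,l,r}(B_{r}(x_{0}))\geq\gamma^{-1}r^{n}(M_{+}(u,l,r)/r)^{p(r)}$. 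Combining these gives $w^{+}_{u,l,r}(A^{+}_{l,r})\leq \gamma\,w^{+}_{u,l,r}(B_{r}(x_{0}))\,|A^{+}_{l,r}|/|B_{r}(x_{0})|$, and substitution into the previous display yields \eqref{eq2.15}.

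The main obstacle, and the reason condition \eqref{2.17} appears with the precise denominator $p(1-\delta)-1$, is that the crude pointwise majorization of the weight above is not by itself sharp enough; a quantitative version of the Hölder step in Step~1 must be executed against the weight $w_{+}$, which forces the Muckenhoupt $A_{p(1-\delta)}$ property of $w_{+}$. That property is exactly what Lemma~\ref{lem2.1} supplies upon taking $t=1/(p(1-\delta)-1)$, since substituting this choice into \eqref{eq2.4} and absorbing constants reproduces \eqref{2.17}. Apart from careful tracking of this Muckenhoupt exponent through the Hölder step, the remainder of the proof is routine bookkeeping of constants.
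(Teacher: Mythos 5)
Your argument is correct in substance but takes a genuinely different route from the paper's. The paper does not use the De Giorgi two-set isoperimetric inequality at all: it estimates $\int_{B_r} w_+ |v-\{v\}_r|\,dx$ directly by Hölder's inequality with the weight $w_+$ (exponents $\tfrac{p(1-\delta)}{p(1-\delta)-1}$ and $p(1-\delta)$), invokes the reverse-Hölder estimate \eqref{eq2.3} of Lemma \ref{lem2.1} with $t=\tfrac{1}{p(1-\delta)-1}$, applies the unweighted Poincaré inequality to $v-\{v\}_r$, and then bounds $\int w_+|v-\{v\}_r|$ from below by $(l-k)\frac{|A^-_{k,r}|}{|B_r|}w^+_{u,l,r}(A^+_{l,r})$ using the structure of $v$. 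You instead run the unweighted De Giorgi isoperimetric inequality first, then Hölder, and finally convert the Lebesgue ratio $|A^+_{l,r}|/|B_r|$ to the weighted ratio using the pointwise bound $w_+\leqslant (M_+/r)^{p(r)}$ together with the averaged lower bound from Lemma \ref{lem2.1}, \eqref{eq2.6} with $t=1$. This does work, and it is arguably simpler; the price you pay is that your application of Lemma \ref{lem2.1} requires \eqref{eq2.4} with $t=1$ rather than $t=\tfrac{1}{p(1-\delta)-1}$, so the hypothesis you actually verify differs from \eqref{2.17} by a multiplicative constant $p(1-\delta)-1$ in front of $L$; since that quantity is bounded above and below by structural constants for the $\delta$ used downstream, this is only a cosmetic mismatch, but it should be noted rather than asserted that \eqref{2.17} is "exactly reproduced."

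Two smaller points. Your displayed isoperimetric inequality has the wrong power of $r$: the classical statement gives $(l-k)\tfrac{|A^+_{l,r}|}{|B_r|}\cdot\tfrac{|A^-_{k,r}|}{|B_r|}\leqslant\gamma r^{1-n}\int_{A^+_{k,r}\setminus A^+_{l,r}}|\nabla\varphi|\,dx$, not $\gamma r\int\cdots$; the $r^{1-n/[p(1-\delta)]}$ you obtain after Hölder is correct, so this is a typographical slip. Finally, your closing paragraph undercuts your own proof unnecessarily: the "crude pointwise majorization" of the weight plus Lemma \ref{lem2.1} with $t=1$ already closes the argument, and no further Muckenhoupt $A_{p(1-\delta)}$ machinery is required for your route. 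The exponent $t=\tfrac{1}{p(1-\delta)-1}$ is what the paper's weighted-Hölder-first proof uses, not something your argument is missing; presenting the unexecuted "weighted Hölder step" as the real proof, while giving the $t=1$ argument as if it were only heuristic, misidentifies which parts of your write-up carry the logical weight.
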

\begin{proof}
Let $\{v\}_{r}= |B_{r}(x_{0})|^{-1} \int\limits_{B_{r}(x_{0})} v\,dx$. Using the Poincare inequality and inequality \eqref{eq2.3} with $t=\frac{1}{p(1-\delta)-1}$ we get
\begin{multline*}
\int\limits_{B_{r}(x_{0})} w_{+}(x,u,l,r) |v-\{v\}_{r}| \,dx \leqslant \gamma \bigg(\int\limits_{B_{r}(x_{0})} w_{+}^{\frac{p(1-\delta)}{p(1-\delta)-1}}(x,u,l,r)\,dx\bigg)^{1-\frac{1}{p(1-\delta)}}\times\\ \times \bigg(\int\limits_{B_{r}(x_{0})}  |v-\{v\}_{r}|^{p(1-\delta)} \,dx\bigg)^{\frac{1}{p(1-\delta)}} \leqslant\gamma r^{1-\frac{n}{p(1-\delta)}}\,w^{+}_{u,l,r}(B_{r}(x_{0}))\,\bigg( \int\limits_{B_{r}(x_{0})} |\nabla v|^{p(1-\delta)}\,dx\bigg)^{\frac{1}{p(1-\delta)}}.
\end{multline*}
Take $v=0$, if $\varphi < k$, $v= \varphi-k$, if $k < \varphi < l$, $v= l -k$, if $\varphi > l$. We evidently have
$\{v\}_{r} \leqslant (l-k) \dfrac{|A^{+}_{k,r}|}{|B_{r}(x_{0})|}$, hence
\begin{equation*}
\int\limits_{B_{r}(x_{0})} w_{+}(x,u,l,r) |v-\{v\}_{r}| \,dx \geqslant (l-k) \bigg(1-\frac{|A^{+}_{k,r}|}{|B_{r}(x_{0})|}\bigg)\int\limits_{A^{+}_{l,r}} w_{+}(x,u,l,r) \,dx,
\end{equation*}
from which the required inequality \eqref{eq2.15} follows. The proof of \eqref{eq2.16} is completely similar.
\end{proof}

\begin{lemma}\label{lem2.5}
\textbf{(Expansion of the Positivity)} Let $u\in D G(B_{R}(x_{0}))$, fix $\xi \in(0,\frac{1}{2M})$ and
assume that with some $\alpha\in(0,1)$ there holds
\begin{equation}\label{eq2.18}
\left| \left\{ x\in B_{r}(x_{0}):u(x)
\geqslant \mu_{+} -\xi\,\omega_{r} \right\} \right|
\leqslant(1-\alpha)\, |B_{r}(x_{0})|.
\end{equation}
Then there exists number $s_{*}$ depending only on
$n$, $p$, $q$, $c_{1}$, $M$, $\alpha$ and $\xi$ such that either
\begin{equation}\label{eq2.19}
\omega_{r}\leqslant  2^{s_{*}+1}\,r ,
\end{equation}
or
\begin{equation}\label{eq2.20}
u(x)\leqslant \mu_{+}-2^{-s_{*}-1}\,\omega_{r}
\quad \text{for a.a.} \ x\in B_{\frac{r}{2}}(x_{0}),
\end{equation}
provided that
\begin{equation}\label{eq2.21}
\frac{1}{\log\log\frac{1}{r}} + L\,(s_{*}+1)\,\frac{\log\log\frac{1}{r}}{\log\frac{1}{r}}\leqslant 1.
\end{equation}
Likewise, if
\begin{equation}\label{eq2.22}
\left| \left\{ x\in B_{r}(x_{0}):u(x)
\leqslant \mu_{-} +\xi\,\omega_{r} \right\} \right|
\leqslant(1-\alpha)\, |B_{r}(x_{0})|,
\end{equation}
then there exists number $s_{*}$ depending only on
$n$, $p$, $q$, $c_{1}$, $M$, $\alpha$ and $\xi$, such that either \eqref{eq2.19} holds or
\begin{equation}\label{eq2.23}
u(x)\geqslant \mu_{-}+2^{-s_{*}-1}\,\omega_{r}
\quad \text{for a.a.} \ x\in B_{\frac{r}{2}}(x_{0}),
\end{equation}
provided that \eqref{eq2.21} holds.
\end{lemma}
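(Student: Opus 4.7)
I would follow the classical De Giorgi expansion-of-positivity scheme, adapted to the weighted setting dictated by our $DG$ class. I only sketch the proof of \eqref{eq2.20}; \eqref{eq2.23} follows by the completely analogous argument.

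Assume \eqref{eq2.19} is violated, so $\omega_r > 2^{s_*+1} r$; otherwise there is nothing to prove. Fix a small $\delta \in (0, 1 - 1/p)$ and introduce the dyadic sequence of levels
$$
k_s := \mu_+ - 2^{-s-1}\,\omega_r, \qquad s_0 \leq s \leq s_*,
$$
where $s_0 \geq 1$ is the smallest integer with $2^{-s_0-1} \leq \xi$. Because $k_s$ is increasing in $s$, the hypothesis \eqref{eq2.18} propagates as $|A^-_{k_s,r}| \geq |A^-_{k_{s_0},r}| \geq \alpha|B_r(x_0)|$ for every $s \geq s_0$. The assumption $\omega_r > 2^{s_*+1}r$, together with $\xi < 1/(2M)$, yields the pointwise range $r \leq M_+(u,k_s,r) \leq 1$ across $s_0 \leq s \leq s_*$, so the Muckenhoupt-type hypotheses of Lemmas \ref{lem2.1}, \ref{lem2.2} and \ref{lem2.4} are all available.

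The engine of the argument is the one-step estimate obtained from \eqref{eq2.15} applied with $k=k_s$, $l=k_{s+1}$, $\varphi = u$, which after inserting the lower bound on $|A^-_{k_s,r}|$ reads
$$
2^{-s-2}\,\omega_r\,\alpha\;\frac{w^+_{u,k_{s+1},r}(A^+_{k_{s+1},r})}{w^+_{u,k_{s+1},r}(B_r(x_0))} \leq \gamma\, r^{1-\tfrac{n}{p(1-\delta)}} \bigg(\int_{A^+_{k_s,r}\setminus A^+_{k_{s+1},r}}\!|\nabla u|^{p(1-\delta)}\,dx\bigg)^{\!\frac{1}{p(1-\delta)}}.
$$
The unweighted gradient integral is then estimated by H\"older applied to $|\nabla u|^{p(1-\delta)} = (w_+|\nabla u|^p)^{1-\delta}\,w_+^{-(1-\delta)/\delta}$: the first factor is controlled by the $DG$-energy bound \eqref{eq2.1} (on a slightly enlarged concentric ball, paying $\sigma^{-q}$), and the second by Lemma \ref{lem2.1} with $t = (1-\delta)/\delta$. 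Noting that the weights $w_+(\cdot,u,k_s,r)$ at consecutive levels differ by a universally bounded factor (since $M_+(u,k_s,r)/M_+(u,k_{s+1},r) \in [1,2]$), the resulting inequality collapses to a recursion of the form
$$
Y_{s+1}^{\kappa} \leq \gamma\,2^{s\gamma}\,\alpha^{-\gamma_0}\,(Y_s - Y_{s+1}), \qquad Y_s := \frac{w^+_{u,k_s,r}(A^+_{k_s,r})}{w^+_{u,k_s,r}(B_r(x_0))},
$$
with some $\kappa > 1$ depending only on the data. Summing telescopically over $s \in [s_0, s_*-1]$ forces $Y_{\bar s}$ below the De\,Giorgi threshold $\nu$ from Lemma \ref{lem2.3} at some $\bar s \in [s_0, s_*]$, as soon as $s_*$ is chosen large enough in terms of $n,p,q,c_1,M,\alpha$ and $\xi$.

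Once $Y_{\bar s} \leq \nu$ is secured, Lemma \ref{lem2.3} applied at level $k_{\bar s}$ with $\xi' := 2^{-\bar s -1}$ yields $u \leq \mu_+ - 2^{-\bar s - 3}\omega_r$ on $B_{r/2}(x_0)$, which is \eqref{eq2.20} after a harmless redefinition of $s_*$ by a bounded additive constant. The main technical obstacle, and the source of the factor $(s_*+1)$ in \eqref{eq2.21}, is keeping the Muckenhoupt-type conditions \eqref{eq2.4}, \eqref{eq2.8} and \eqref{2.17} simultaneously valid through the whole $s$-iteration: the weight comparison between levels $k_s$ and $k_{s_*}$ costs a factor whose logarithm grows linearly in $s_*$ (since $M_+(u,k_s,r)/M_+(u,k_{s_*},r)$ can be as large as $2^{s_*-s_0}$), and absorbing this cumulative loss is precisely what the term $L(s_*+1)\log\log(1/r)/\log(1/r)$ in \eqref{eq2.21} achieves. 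Careful bookkeeping of the weight constants and of the H\"older exponents along the iteration is the most delicate part of the argument.
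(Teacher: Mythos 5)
Your overall strategy matches the paper's: run through dyadic levels $k_s$, apply Lemma~\ref{lem2.4} (with $\varphi=u$) at each step, control the gradient integral by H\"older and the $DG$-energy bound \eqref{eq2.1}, invoke Lemma~\ref{lem2.1} for the weight, use condition \eqref{eq2.21} to make all the weights $w_+(\cdot,u,k_s,r)$ comparable to $w_+(\cdot,u,k_{s_*},r)$ within a universal factor, and finally feed the resulting smallness into Lemma~\ref{lem2.3}. The paper's bookkeeping is slightly different from yours --- it raises the one-step estimate to the power $p(1-\delta)/\delta$ and sums a constant left-hand side against a bounded sum of disjoint integrals, rather than running a $Y_s-Y_{s+1}$ telescope --- but the core idea is identical.

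There is, however, a concrete step in your sketch that would fail as written. You claim the recursion takes the form
\begin{equation*}
Y_{s+1}^{\kappa}\leqslant\gamma\,2^{s\gamma}\,\alpha^{-\gamma_0}\,(Y_s-Y_{s+1}),
\end{equation*}
and then argue by telescoping. The factor $2^{s\gamma}$ should not be there, and with it the telescoping argument does not close: if all $Y_{s+1}>\nu$, you get $Y_s-Y_{s+1}\geqslant \nu^{\kappa}\gamma^{-1}\alpha^{\gamma_0}2^{-s\gamma}$, and $\sum_{s\geqslant s_0}2^{-s\gamma}<\infty$, so summing to $s_*$ gives no contradiction no matter how large $s_*$ is. In fact the $s$-dependent powers of $2$ cancel: the level gap is $k_{s+1}-k_s\sim 2^{-s}\omega_r$ on the left of \eqref{eq2.15}, while the $DG$ bound \eqref{eq2.1} contributes a factor $M_+(u,k_s,r)/r\leqslant 2^{-s}\omega_r/r$ on the right, and these two $2^{-s}$ factors cancel exactly. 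After cancellation the constant in the one-step inequality is $s$-independent, and then either your telescoping or the paper's summation gives $(s_*-s_0)\cdot(\text{const})\leqslant 1$, which does force the measure ratio below $\nu$ for $s_*$ large. Two smaller remarks: (i) you should also assume \eqref{eq2.20} violated --- $\sup_{B_{r/2}}(u-k_{s_*})_+\geqslant 2^{-s_*-1}\omega_r$ --- since this, not just the negation of \eqref{eq2.19}, is what guarantees $M_+(u,k_{s_*},r)\geqslant r$ needed for the Muckenhoupt hypotheses; (ii) in a $Y_s-Y_{s+1}$ telescope you must first renormalize $Y_s$ and $Y_{s+1}$ against a \emph{common} weight (say $w_+(\cdot,u,k_{s_*},r)$), since as defined they carry different normalizing factors; the paper's comparability estimate under \eqref{eq2.21} is precisely what licenses this.
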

\begin{proof}
We provide the proof of \eqref{eq2.20},
while the proof of \eqref{eq2.23} is completely similar.
We set $k_{s}:=\mu_{r}^{+}-\dfrac{\omega_{r}}{2^{s}}$,
$s=\log\dfrac{1}{\xi}, \ldots,s_{*}-1$, where $s_{*}$ is large enough to be chosen later.
We will assume that $\sup\limits_{B_{\frac{r}{2}}(x_{0})}(u-k_{s_{*}})_{+}\geqslant \dfrac{\omega_{r}}{2^{s_{*}+1}}$,  since otherwise inequality \eqref{eq2.20} is evident. If inequality \eqref{eq2.19} is violated, then Lemma \ref{lem2.5} with $l=k_{s+1}$ and $k=k_{s}$ yields
\begin{multline*}
\frac{\omega_{r}}{2^{s+1}}\frac{w^{+}_{u,k_{s+1},r}(A^{+}_{k_{s+1},r})}{w^{+}_{u,k_{s+1},r}(B_{r}(x_{0}))}\leqslant
\gamma(\alpha)\, r^{1-\frac{n}{p(1-\delta)}}\bigg(\int\limits_{A^{+}_{k_{s},r}\setminus A^{+}_{k_{s+1},r}}|\nabla u|^{p(1-\delta)}\,dx\bigg)^{\frac{1}{p(1-\delta)}}\leqslant \\ \leqslant \gamma(\alpha)\,r^{1-\frac{n}{p(1-\delta)}}\,\bigg(\int\limits_{A^{+}_{k_{s},r}\setminus A^{+}_{k_{s+1},r}}w_{+}^{-\frac{1-\delta}{\delta}}(x,u,k_{s},r)dx\bigg)^{\frac{\delta}{p(1-\delta)}}\bigg(\int\limits_{A^{+}_{k_{s},r}}w_{+}(x,u,k_{s},r)|\nabla u|^{p}\,dx\bigg)^{\frac{1}{p}}.
\end{multline*}
From this, by inequality \eqref{eq2.1} we obtain
\begin{equation*}
\frac{w^{+}_{u,k_{s+1},r}(A^{+}_{k_{s+1},r})}{w^{+}_{u,k_{s+1},r}(B_{r}(x_{0}))}\leqslant \gamma(\alpha)\,r^{-\frac{n}{p(1-\delta)}}\bigg(\int\limits_{A^{+}_{k_{s},r}\setminus A^{+}_{k_{s+1},r}}w_{+}^{-\frac{1-\delta}{\delta}}(x,u,k_{s},r)dx\bigg)^{\frac{\delta}{p(1-\delta)}}[w^{+}_{u,k_{s},r}(B_{2r}(x_{0}))]^{\frac{1}{p}}.
\end{equation*}
By our choice and  \eqref{eq2.21} we have
\begin{equation*}
\bigg(\frac{M_{+}(u, k_{s}, r)}{M_{+}(u, k_{s_{*}}, r)}\bigg)^{p(|x-x_{0}|)} \leqslant 2^{(s_{*}+1-s)p(|x-x_{0}|)}
\leqslant 2^{L (s_{*}+1)\frac{\log\log\frac{1}{r}}{\log\frac{1}{r}}}\leqslant 2,\quad x\in B_{r}(x_{0}),
\end{equation*}
therefore  $w^{+}_{u,k_{s+1},r}(A^{+}_{k_{s+1},r})\geqslant \gamma^{-1} w^{+}_{u,k_{s_{*}},r}(A^{+}_{k_{s_{*}},r})$
and $w^{+}_{u,k_{s+1},r}(B_{r}(x_{0})) + w^{+}_{u,k_{s},r}(B_{2r}(x_{0}))\leqslant \gamma w^{+}_{u,k_{s_{*}},r}(B_{r}(x_{0}))$, $s=\log\dfrac{1}{\xi}, \ldots,s_{*}-1$, so from the previous relation we have
\begin{equation*}
[w^{+}_{u,k_{s_{*}},r}(A^{+}_{k_{s_{*}},r})]^{\frac{p(1-\delta)}{\delta}}\leqslant \gamma(\alpha)r^{-\frac{n}{\delta}}\,[w^{+}_{u,k_{s_{*}},r}(B_{r}(x_{0}))]^{(p+1)\frac{1-\delta}{\delta}}\,\int\limits_{A^{+}_{k_{s},r}\setminus A^{+}_{k_{s+1},r}}w_{+}^{-\frac{1-\delta}{\delta}}(x,u,k_{s_{*}},r)dx.
\end{equation*}
Summing up these inequalities over $s=\log\dfrac{1}{\xi}, \ldots,s_{*}-1$, we conclude that
\begin{multline*}
(s_{*}-\log\frac{1}{\xi}-1)[w^{+}_{u,k_{s_{*}},r}(A^{+}_{k_{s_{*}},r})]^{\frac{p(1-\delta)}{\delta}}\leqslant \\ \leqslant \gamma(\alpha)r^{-\frac{n}{\delta}}\,[w^{+}_{u,k_{s_{*}},r}(B_{r}(x_{0}))]^{(p+1)\frac{1-\delta}{\delta}}\,\int\limits_{B_{r}(x_{0})}w_{+}^{-\frac{1-\delta}{\delta}}(x,u,k_{s_{*}},r)dx.
\end{multline*}
Using inequality \eqref{eq2.2} from the last inequality we arrive at
\begin{equation*}
w^{+}_{u,k_{s_{*}},r}(A^{+}_{k_{s_{*}},r})\leqslant \gamma(\alpha)\bigg(s_{*}-\log\frac{1}{\xi}-1\bigg)^{-\frac{\delta}{p(1-\delta)}}
w^{+}_{u,k_{s_{*}},r}(B_{r}(x_{0})).
\end{equation*}
Choosing $s_{*}$ by the condition $\gamma(\alpha)\bigg(s_{*}-\log\frac{1}{\xi}-1\bigg)^{-\frac{\delta}{p(1-\delta)}}=\nu$ and
using  Lemma \ref{lem2.3} we obtain  \eqref{eq2.20}, which proves Lemma \ref{lem2.5}.
\end{proof}

\subsection{Proof of Theorems \ref{th1.1}, \ref{th2.1}}
To complete the proof of Theorems \ref{th1.1} and \ref{th2.1} we fix $R$ by the condition
\begin{equation*}
\frac{1}{\log\log\frac{1}{R}} + L\,(s_{*}+1)\,\frac{\log\log\frac{1}{R}}{\log\frac{1}{R}}\leqslant 1,
\end{equation*}
where $s_{*}$ is the number defined in Lemma \ref{lem2.5}, and  assume that the following two alternative
cases are possible:
$$
\left|\left\{x\in B_{r}(x_{0}):
u(x)\geqslant \mu^{+} -\frac{\omega_{r}}{2^{s_{0}}}\right\} \right|
\leqslant \frac{1}{2}\,|B_{r}(x_{0})|,\quad s_{0}\geqslant 2 +[\log M],
$$
or
$$
\left|\left\{x\in B_{r}(x_{0}):
u(x)\leqslant \mu^{-} +\frac{\omega_{r}}{2^{s_{0}}}\right\} \right|
\leqslant \frac{1}{2}\,|B_{r}(x_{0})|
$$
for any $0<r< \rho < R$. Assume, for example, the first one holds. Then by Lemma \ref{lem2.5}  we obtain
$$
\omega_{\frac{r}{2}}\leqslant \left(1- 2^{-s_{*}-1}\right)
\omega_{r}+ 2^{s_{*}+1} r.
$$
Iterating this inequality, we have
\begin{equation*}
\omega_{r}\leqslant \gamma\,M\, \bigg(\frac{r}{\rho}\bigg)^{\beta} +\gamma\,\rho, \quad \beta=\beta(s_{*})\in (0,1).
\end{equation*}
This completes the proof of Theorems \ref{th1.1} and \ref{th2.1}.

%%%%%%%%%%%%%%%%%%%%%%%%%%%%%%%%%%%%%%%%%%%%%%%%%%%%%%%%%%%%%%%%%%%%%%%%%%%%%%%%%%%%%%%%%%%%%%%%%%%%%%%%%%%%%%%%%
%%%%%%%%%%%%%%%%%%%%%%%%%%%%%%%%%%%%%%%%%%%%%%%%%%%%%%%%%%%%%%%%%%%%%%%%%%%%%%%%%%%%%%%%%%%%%%%%%%%%%%%%%%%%%%%%
%%%%%%%%%%%%%%%%%%%%%%%%%%%%%%%%%%%%%%%%%%%%%%%%%%%%%%%%%%%%%%%%%%%%%%%%%%%%%%%%%%%%%%%%%%%%%%%%%%%%%%%%%%%%%%%%%%

\section{Upper and lower estimates of auxiliary solutions}\label{Sec3}

In this Section we prove upper and lower bounds for auxiliary solutions
$v:=v(x,m)$ to the problem
\begin{equation*}
{\rm div} \mathbf{A}(x, \nabla v) =0, \quad x\in \mathcal{D}:=B_{16\rho}(x_{0})\setminus E,\quad E\subset B_{\rho}(x_{0}),
\end{equation*}
\begin{equation*}
v-m\psi\in W_{0}(\mathcal{D}),
\end{equation*}
where $0 <m \leqslant M$ is some fixed number, and $\psi\in W_{0}(B_{16\rho}(x_{0}))$, $\psi=1$ on $E$.
The existence of the solutions
$v$ follows from the general theory of monotone operators. We will assume that the following
integral identity holds:
\begin{equation}\label{eq3.1}
\int_{\mathcal{D}}\mathbf{A}(x, \nabla v)\,\nabla\varphi \,dx=0
\quad \text{for any } \ \varphi\in W_{0}(\mathcal{D}).
\end{equation}

Testing \eqref{eq3.1} by $\varphi=(v-m)_{+}$ and by $\varphi=v_{-}$ and using condition
\eqref{eq1.8}, we obtain that $0\leqslant v\leqslant m\leqslant M$.

For $\rho<\rho_{1}<\rho_{2}\leqslant 16\rho$ we set $K(\rho_{1},\rho_{2}):=B_{\rho_{2}}(x_{0})\setminus B_{\rho_{1}}(x_{0})$,
$M(\rho_{1}):=\sup\limits_{K(\rho_{1}, 16\rho)} v$, $w(x,\rho_{1}):=\bigg(1+\dfrac{M(\rho_{1})}{\rho_{1}}\bigg)^{p(|x-x_{0}|)}$,
$w(\rho_{1},\rho_{2}):=\int\limits_{K(\rho_{1},\rho_{2})}w(x,\rho_{1})\,dx.$

Note that similarly to \eqref{eq2.5}, \eqref{eq2.6}, for all $\rho_{1}\in(\rho, 16\rho)$ there hold
\begin{equation}\label{eq3.2}
\gamma^{-1}\bigg(1+\frac{M(\rho_{1})}{\rho_{1}}\bigg)^{-tp(16\rho)}\leqslant |K(\rho_{1},16\rho)|^{-1}\,\int\limits_{K(\rho_{1},16\rho)} w^{-t}(x,\rho_{1})\,dx\leqslant \gamma \bigg(1+\frac{M(\rho_{1})}{\rho_{1}}\bigg)^{-tp(16\rho)},  t>0,
\end{equation}
\begin{equation}\label{eq3.3}
\gamma^{-1}\bigg(1+\frac{M(\rho_{1})}{\rho_{1}}\bigg)^{tp(16\rho)}\leqslant |K(\rho_{1},16\rho)|^{-1}\,\int\limits_{K(\rho_{1},16\rho)} w^{t}(x,\rho_{1})\,dx\leqslant \gamma \bigg(1+\frac{M(\rho_{1})}{\rho_{1}}\bigg)^{tp(16\rho)},\quad  t>0,
\end{equation}
provided that
\begin{equation}\label{eq3.4}
\frac{1}{\log\log\dfrac{1}{16\rho}} + t\,\bar{C}\,L\,\log(1+\frac{M}{\rho})\frac{\log\log\dfrac{1}{16\rho}}{\log^{2}\dfrac{1}{16\rho}} \leqslant 1,
\end{equation}
where $\bar{C}=\max(C, C_{1})$ and $C$, $C_{1}$ are the constants defined in  Lemmas  \ref{lem2.1}, \ref{lem2.2}. Therefore Lemmas \ref{lem2.1}, \ref{lem2.2} continue to hold in $K(\rho_{1},16\rho)$ with $w_{\pm}(x,u,k,r)$ replaced by $w(x,\rho_{1})$.

To formulate our  results, we need the notion of the capacity.
Let $E\subset B_{r}(x_{0})\subset  B_{\rho}(x_{0})$
and for any $m>0$ set
$$
C_{p(\cdot)}(E, B_{16\rho}(x_{0});m):=\frac{1}{m}\,\inf\limits_{\varphi\in \mathfrak{M}(E)}
\int\limits_{B_{16\rho}(x_{0})}|m\nabla \varphi|^{p(x)}\,dx,
$$
where the infimum is taken over the set $\mathfrak{M}(E)$ of all functions
$\varphi\in W_{0}(B_{16\rho}(x_{0}))$ with $\varphi\geqslant1$ on $E$. If $m=1$,
this definition leads to the standard definition of $Cp(\cdot)(E, B_{16\rho}(x_{0}))$
capacity (see, e.g. \cite{AlhutovKrash04}).

%%%%%%%%%%%%%%%%%%%%%%%%%%%%%%%%%%%%%%%%%%%%%%%%%%%%%%%%%%%%%%%%%%%%%%%%%%%%%%%%%%%%%%%%%%%%%%%%%%%%%%%%%%%%%%%%%%%%
%%%%%%%%%%%%%%%%%%%%%%%%%%%%%%%%%%%%%%%%%%%%%%%%%%%%%%%%%%%%%%%%%%%%%%%%%%%%%%%%%%%%%%%%%%%%%%%%%%%%%%%%%%%%%%%%%%%%

\subsection{Upper bound for the function $v$}
We note that in the standard case (i.e. if $L=0$) the upper bound for the function $v$
was proved in \cite{IVSkr1983}
(see also \cite[Chap.\,8, Sec.\,3]{IVSkrMetodsAn1994}, \cite{IVSkrSelWorks}).

\begin{lemma}\label{lem3.1}
There exists positive number $\gamma_{1}$ depending only on the data such that if
\begin{equation}\label{eq3.5}
\rho\,\frac{C_{p(\cdot)}(E, B_{16\rho}(x_{0});m)}{w(\frac{3}{2}\rho,16\rho)} \geqslant 1,
\end{equation}
then
\begin{equation}\label{eq3.6}
\gamma^{-1} \bigg(\rho^{p}\frac{C_{p(\cdot)}(E,B_{16\rho}(x_{0}),m)}{w\big(\frac{3}{2}\rho, 16\rho\big)}\bigg)^{\frac{1}{p-1}}\leqslant
M\big(3/2\rho\big)\leqslant  \gamma \bigg(\rho^{p}\frac{C_{p(\cdot)}(E,B_{16\rho}(x_{0}),m)}{w\big(\frac{3}{2}\rho, 16\rho\big)}\bigg)^{\frac{1}{p-1}},
\end{equation}
provided that
\begin{equation}\label{eq3.7}
\frac{1}{\log\log\dfrac{1}{16\rho}} + \gamma_{1}\,L\,\frac{\log\log\dfrac{1}{16\rho}}{\log\dfrac{1}{16\rho}} \leqslant 1.
\end{equation}
\end{lemma}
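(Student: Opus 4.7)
The strategy follows the classical capacitary argument of \cite{IVSkr1983} for the constant-exponent $p$-Laplacian ($L=0$), transported to the $p(x)$-setting by means of the weighted Muckenhoupt-type bounds \eqref{eq3.2}--\eqref{eq3.3} and the weighted Sobolev inequality of Lemma \ref{lem2.2} (adapted to the weight $w(x,\rho_{1})$). Both halves of \eqref{eq3.6} rest on testing the integral identity \eqref{eq3.1} against carefully chosen admissible functions.

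For the lower bound on $M(3\rho/2)$, the function $\varphi=v/m$ extended by $1$ on $E$ lies in $W_{0}(B_{16\rho}(x_{0}))$ and equals $1$ on $E$ in trace sense, since $v-m\psi\in W_{0}(\mathcal{D})$ and $\psi=1$ on $E$. Hence $\varphi$ is admissible in the definition of $C_{p(\cdot)}(E,B_{16\rho}(x_{0});m)$, producing
\begin{equation*}
m\,C_{p(\cdot)}(E,B_{16\rho}(x_{0});m)\leq\int_{B_{16\rho}(x_{0})}|\nabla v|^{p(x)}\,dx.
\end{equation*}
A reverse energy bound obtained by testing \eqref{eq3.1} with $v\,\zeta^{q}$ for a cutoff $\zeta$ supported in $K(3\rho/2,16\rho)$, together with Young's inequality and $v\leq M(3\rho/2)$ on this annulus, gives $\int_{K(3\rho/2,16\rho)}|\nabla v|^{p(x)}\,dx\leq \gamma(M(3\rho/2)/\rho)^{p}\,w(3\rho/2,16\rho)$. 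Iterating this Caccioppoli-type bound across a dyadic chain of annuli from $16\rho$ inward, using monotonicity of $M(\cdot)$ and \eqref{eq3.5} to keep $M(\rho_{1})/\rho_{1}\geq 1$, absorbs the inner-ball contribution and rearranges into $M(3\rho/2)^{p-1}\geq \gamma^{-1}\rho^{p}\,C_{p(\cdot)}/w(3\rho/2,16\rho)$.

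For the upper bound, the dual choice is to test \eqref{eq3.1} with $v-m\psi_{*}\in W_{0}(\mathcal{D})$, where $\psi_{*}$ is the capacity minimizer for $C_{p(\cdot)}(E,B_{16\rho}(x_{0});m)$. The structure conditions \eqref{eq1.2} and Young's inequality then yield $\int|\nabla v|^{p(x)}\,dx\leq \gamma\,m\,C_{p(\cdot)}$. A weighted Moser--De Giorgi sup estimate for $v$ on the annulus $K(3\rho/2,16\rho)$, driven by the Caccioppoli inequality of $DG$-type \eqref{eq2.1} applied to $v$ and the weighted Sobolev inequality \eqref{eq2.7} transposed to the weight $w(x,3\rho/2)$, yields $M(3\rho/2)^{p}\,w(3\rho/2,16\rho)\leq \gamma\,\rho^{p}\int|\nabla v|^{p(x)}\,dx$. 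Plugging in the energy bound completes the upper bound in \eqref{eq3.6}.

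The main technical obstacle is that the weight $w(x,\rho_{1})=(1+M(\rho_{1})/\rho_{1})^{p(|x-x_{0}|)}$ contains the very quantity $M(\rho_{1})$ we are estimating inside its exponent, rendering every inequality implicit in $M$. Condition \eqref{eq3.7} is engineered precisely to tame this feedback: it keeps the log-log correction $L\log\log(1/16\rho)/\log(1/16\rho)\cdot\log(1+M/\rho)$ uniformly bounded, so the Muckenhoupt bounds \eqref{eq3.2}--\eqref{eq3.3} and the weighted Sobolev inequality hold with structural constants throughout the chain. The non-degeneracy hypothesis \eqref{eq3.5} complements this by securing $M(3\rho/2)/\rho\geq 1$, ensuring the comparison $w(x,3\rho/2)\asymp (M(3\rho/2)/\rho)^{p(|x-x_{0}|)}$ and keeping the argument in its natural regime.
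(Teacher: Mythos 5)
Your overall strategy (capacity comparison for the lower bound, energy estimate plus sup-estimate for the upper bound, with the weighted Sobolev and Muckenhoupt machinery supplying the constants) is the right skeleton and matches the spirit of the paper's proof, but there are two genuine gaps.

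\textbf{Lower bound.} Your starting inequality $m\,C_{p(\cdot)}(E,B_{16\rho}(x_{0});m)\leq\int_{\mathcal{D}}|\nabla v|^{p(x)}\,dx$ is correct: $v/m$ (extended by $1$ on $E$) is an admissible competitor. But you then propose to bound this energy by a Caccioppoli estimate on the annulus $K(3\rho/2,16\rho)$ and to ``absorb the inner-ball contribution'' by iterating inward over dyadic annuli. This cannot work: the contribution $\int_{\mathcal{D}\cap B_{3\rho/2}(x_{0})}|\nabla v|^{p(x)}\,dx$ lives on a punctured ball from which $E$ has been removed, not on a clean annulus, and a Caccioppoli inequality for $v\zeta^q$ with cutoffs disjoint from $E$ gives you no handle on the gradient near $\partial E$ where $v$ is close to $m$. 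The paper avoids this by testing \eqref{eq3.1} with $\varphi=v-m\zeta_{1}^{\,q}$ where $\zeta_{1}\equiv1$ on $B_{2\rho}(x_{0})$ and $\mathrm{supp}\,\zeta_{1}\subset B_{4\rho}(x_{0})$; this cancels the inner region entirely (since $\varphi=0$ on $\partial E$) and yields $\int_{\mathcal{D}}|\nabla v|^{p(x)}\,dx\leq\gamma\,(m/\rho)\int_{K(2\rho,4\rho)}|\nabla v|^{p(x)-1}\,dx$, localizing the whole energy to an annulus. A second test with $v\zeta_{2}^{\,q}$, Young's inequality with a free parameter $\varepsilon_{1}$, and the optimal choice $\varepsilon_{1}=M(3\rho/2)/\rho$ then produce the $1/(p-1)$ power. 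Without the $\varphi=v-m\zeta_{1}^{\,q}$ trick the lower bound does not close.

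\textbf{Upper bound.} You test with $\varphi=v-m\psi_{*}$ to get $\int_{\mathcal{D}}|\nabla v|^{p(x)}\,dx\leq\gamma\,m\,C_{p(\cdot)}$, which is fine, and then claim a sup-estimate of the form $M(3\rho/2)^{p}\,w(3\rho/2,16\rho)\leq\gamma\rho^{p}\int|\nabla v|^{p(x)}\,dx$. Plugging the energy bound into this gives $M(3\rho/2)^{p}\leq\gamma\,m\,\rho^{p}C_{p(\cdot)}/w(3\rho/2,16\rho)$, which does \emph{not} rearrange into $M(3\rho/2)^{p-1}\leq\gamma\,\rho^{p}C_{p(\cdot)}/w(3\rho/2,16\rho)$: it loses a factor $m/M(3\rho/2)\geq1$ that can be arbitrarily large (when $E$ is a small set, $M(3\rho/2)\ll m$). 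The missing idea is the truncation $v_{M_{0}}:=\min\{v,M_{0}\}$ with $M_{0}=M(\rho_{0}^{(1)})$, together with testing \eqref{eq3.1} by $\varphi=v_{M_{0}}-(M_{0}/m)\,v\in W_{0}(\mathcal{D})$; this produces $\int_{\mathcal{D}}|\nabla v_{M_{0}}|^{p(x)}\,dx\leq\gamma\,M_{0}\,(C_{p(\cdot)}+w)$ with the factor $M_{0}$ rather than $m$ in front. Only then does the De Giorgi iteration and the final Young inequality on $M_{0}\cdot\rho^{p}C_{p(\cdot)}/w$ give a term of order $(\rho^{p}C_{p(\cdot)}/w)^{p/(p-1)}$, which after iterating in $\sigma$ yields the claimed $1/(p-1)$ exponent. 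Your version skips the truncation entirely and therefore cannot recover the correct power.
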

\begin{proof}
First we prove  inequality on the right-hand side of \eqref{eq3.6}. Fix $\sigma\in(0,1/4)$ and for any $s\in(5/4\rho, 2\rho(1-\sigma)), j=0,1,2,....$ set \\$\rho^{(1)}_{j}:=s(1+\sigma-\sigma 2^{-j}),\ \ K_{j}:=K(\rho^{(1)}_{j}, 16\rho), k_{j}:=k- k\,2^{-j},\ \ k>0,
A_{j}:=K_{j}\cap \{v> k_{j}\},\\  M_{j}:= \sup\limits_{K_{j}} v$ and let
$\zeta_{j}\in C^{\infty}(B_{16\rho}(x_{0})), \ \
0\leqslant\zeta_{j}\leqslant 1,  \zeta_{j}=0 \ \text{in} \ B_{\rho^{(1)}_{j}}(x_{0}),\ \
\zeta_{j}=1 \ \text{in} \ K_{j+1},\\
|\nabla \zeta_{j}|\leqslant \gamma\,2^{j}\big(\sigma\rho\big)^{-1}.$

Further we will assume that
$$ M\big(\frac{3}{2}\rho\big)\geqslant \frac{3}{2}\rho,$$
since otherwise, by \eqref{eq3.5}  inequality \eqref{eq3.6} is evident, moreover this inequality yields
$$\bigg(\frac{M(\rho_{1})}{\rho_{1}}\bigg)^{p(|x-x_{0}|)}\leqslant w(x,\rho_{1})\leqslant \gamma \bigg(\frac{M(\rho_{1})}{\rho_{1}}\bigg)^{p(|x-x_{0}|)},\quad x\in \mathcal{D}. $$
Testing \eqref{eq3.1} by $\varphi=(v-k_{j+1})_{+}\,\zeta_{j}^{\,q}$ and using the
Young inequality  we obtain
\begin{multline*}
\int\limits_{K_{j}\cap\{v>k_{j+1}\}}
|\nabla v|^{p(x)}\,\zeta_{j}^{\,q}\,dx
\leqslant \gamma\,\sigma^{-\gamma}\,2^{j\gamma}\rho^{-p}
\int\limits_{A_{j}}\bigg(\frac{M_{j}}{\rho^{(1)}_{j}}\bigg)^{p(|x-x_{0}|)}(v-k_{j})^{p}_{+}\,dx \leqslant \\
\leqslant \gamma\,\sigma^{-\gamma}\,2^{j\gamma}\rho^{-p}\int\limits_{A_{j}}\bigg(\frac{M_{0}}{\rho^{(1)}_{0}}\bigg)^{p(|x-x_{0}|)}(v-k_{j})^{p}_{+}\,dx=\gamma\,\sigma^{-\gamma}\,2^{j\gamma}\rho^{-p}\int\limits_{A_{j}}\,w(x,\rho^{(1)}_{0})(v-k_{j})^{p}_{+}\,dx.
\end{multline*}
Using again the Young inequality, assuming that $k > \varepsilon_{0}\,M_{0}$, where $\varepsilon_{0}\in (0, 1)$ is small enough, from the previous we have
\begin{multline*}
\int\limits_{K_{j}\cap\{v> k_{j+1}\}}w(x,\rho^{(1)}_{0})\,
|\nabla v|^{p}\,\zeta_{j}^{\,q}\,dx \leqslant \gamma\,\bigg(\frac{M_{0}}{\rho^{(1)}_{0}}\bigg)^{p}\int\limits_{K_{j}\cap\{v> k_{j+1}\}}w(x,\rho^{(1)}_{0})\,dx +\\ +\gamma\,\sigma^{-\gamma}\,2^{j\gamma}\rho^{-p}\int\limits_{A_{j}}\,w(x,\rho^{(1)}_{0})(v-k_{j})^{p}_{+}\,dx
\leqslant \gamma\,\sigma^{-\gamma}\,2^{j\gamma}\,\bigg(\frac{M_{0}}{\rho\,k}\bigg)^{p}\int\limits_{A_{j}}w(x,\rho^{(1)}_{0})(v-k_{j})^{p}_{+}\,dx +\\ +\gamma\,\sigma^{-\gamma}\,2^{j\gamma}\rho^{-p}\int\limits_{A_{j}}\,w(x,\rho^{(1)}_{0})(v-k_{j})^{p}_{+}\,dx \leqslant
\gamma\,\varepsilon^{-\gamma}_{0}\,\sigma^{-\gamma}\,2^{j\gamma}\rho^{-p}\int\limits_{A_{j}}\,w(x,\rho^{(1)}_{0})(v-k_{j})^{p}_{+}\,dx.
\end{multline*}
Choose $\gamma_{1}>0$ large enough, by our assumption Lemma \ref{lem2.2} is applicable, therefore from this we obtain
\begin{multline*}
y_{j+1}:=\int\limits_{A_{j+1}}\,w(x,\rho^{(1)}_{0})(v-k_{j+1})^{p}_{+}\,dx \leqslant\\ \leqslant \gamma\,\varepsilon^{-\gamma}_{0}\,\sigma^{-\gamma}\,2^{j\gamma}\,[w(\rho^{(1)}_{0},16\rho)]^{-(1-\frac{1}{\kappa_{1}})}\,\bigg(\int\limits_{A_{j+1}}w(x,\rho^{(1)}_{0})\,dx\bigg)^{1-\frac{1}{\kappa_{1}}}\int\limits_{A_{j}}\,w(x,\rho^{(1)}_{0})(v-k_{j})^{p}_{+}\,dx \leqslant \\ \leqslant \gamma\,\varepsilon^{-\gamma}_{0}\,\sigma^{-\gamma}\,2^{j\gamma}\,[w(\rho^{(1)}_{0},16\rho)]^{-(1-\frac{1}{\kappa_{1}})}\,k^{-p(1-\frac{1}{\kappa_{1}})}\,y^{2-\frac{1}{\kappa_{1}}}_{j},
j=0,1,2,...
\end{multline*}
Hence, setting $M_{\sigma}:=M_{\infty}$, by standard arguments (see, e.g. \cite{LadUr}) and by our choice, we arrive at
\begin{equation}\label{eq3.8}
M^{p}_{\sigma} \leqslant \varepsilon^{p}_{0}\,M^{p}_{0}+\gamma\,\varepsilon^{-\gamma}_{0}\,\sigma^{-\gamma}\,[w(\rho^{(1)}_{0},16\rho)]^{-1}\,\int\limits_{K(\rho^{(1)}_{0},16\rho)}w(x,\rho^{(1)}_{0})\,
v^{p}\,dx .
\end{equation}

Let us estimate the  second term on the right-hand side of \eqref{eq3.8}.
For this we set \\ $v_{M_{0}}:=\min\{v, M_{0}\}$,
by Lemma \ref{lem2.2} we have for any $\varepsilon\in (0,1)$
\begin{multline*}
\int\limits_{K(\rho^{(1)}_{0},16\rho)}w(x,\rho^{(1)}_{0})v^{p}dx=\int\limits_{K(\rho^{(1)}_{0},16\rho)}w(x,\rho^{(1)}_{0})
v^{p}_{M_{0}}dx\leqslant \gamma\rho^{p}\int\limits_{K(\rho^{(1)}_{0},16\rho)}w(x,\rho^{(1)}_{0})|\nabla v_{M_{0}}|^{p}dx\leqslant
\\ \leqslant \varepsilon\,\gamma^{-1}\,\sigma^{\gamma}\,\rho^{p}\, \int\limits_{K(\rho^{(1)}_{0},16\rho)}[w(x,\rho^{(1)}_{0})]^{\frac{p(x)}{p(x)-p}}\,dx + \gamma\,\varepsilon^{-\gamma}\,\sigma^{-\gamma}\,\rho^{p}\,\int\limits_{\mathcal{D}}|\nabla v_{M_{0}}|^{p(x)}\,dx= \\
= \varepsilon\,\gamma^{-1}\,\sigma^{\gamma}\,M^{p}_{0}\,w(\rho^{(1)}_{0},16\rho)+\gamma\,\varepsilon^{-\gamma}\,\sigma^{-\gamma}\,\rho^{p}\,\int\limits_{\mathcal{D}}|\nabla v_{M_{0}}|^{p(x)}\,dx.
\end{multline*}
Collecting the last two inequalities we obtain
\begin{equation}\label{eq3.9}
M^{p}_{\sigma}\leqslant (\varepsilon^{p}_{0}+ \frac{\varepsilon}{\varepsilon^{\gamma}_{0}})\,M^{p}_{0}+ \gamma\,\varepsilon^{-\gamma}_{0}\,\varepsilon^{-\gamma}\,\sigma^{-\gamma}\,\rho^{p}\,[w(\rho^{(1)}_{0},16\rho)]^{-1}\,\int\limits_{\mathcal{D}}|\nabla v_{M_{0}}|^{p(x)}\,dx .
\end{equation}

Let us estimate the second term on the right-hand side of \eqref{eq3.9}. Let $\psi\in \mathfrak{M}(E)$ be such that
$$
\frac{1}{m}\int\limits_{B_{16\rho}(x_{0})}
|m\,\nabla \psi|^{p(x)}\,dx
\leqslant
C_{p(\cdot)}(E,B_{16\rho}(x_{0});m)+ \rho^{n}\leqslant C_{p(\cdot)}(E,B_{16\rho}(x_{0});m)+ \gamma w(\rho^{(1)}_{0},16\rho) .
$$
Testing identity \eqref{eq3.1} by $\varphi=v-m\psi$,
by the Young inequality  we obtain
\begin{equation*}
\int\limits_{\mathcal{D}} |\nabla v|^{p(x)}\,dx
\leqslant \gamma  \int\limits_{B_{16\rho}(x_{0})}
|m\nabla \psi|^{p(x)}\,dx\leqslant
\gamma m\left( C_{p(\cdot)}(E, B_{16\rho}(x_{0});m)+ w(\rho^{(1)}_{0},16\rho) \right).
\end{equation*}
Testing \eqref{eq3.1} by $\varphi=v_{M_{0}}-\dfrac{M_{0}}{m}\,v$,
using the Young inequality and the previous inequality, we have
$$
\int\limits_{\mathcal{D}} |\nabla v_{M_{0}}|^{p(x)}\,dx
\leqslant \gamma \frac{ M_{0}}{m}
\int\limits_{\mathcal{D}} |\nabla v|^{p(x)}\,dx
\leqslant
\gamma M_{0}\big( C_{p(\cdot)}(E, B_{16\rho}(x_{0});m)+ w(\rho^{(1)}_{0},16\rho) \big).
$$
This inequality, the Young inequality  and \eqref{eq3.9} imply that
\begin{multline*}
M^{p}_{\sigma} \leqslant (\varepsilon^{p}_{0}+\frac{\varepsilon}{\varepsilon^{\gamma}_{0}})\,M^{p}_{0} + \gamma\,\varepsilon^{-\gamma}_{0}\,\varepsilon^{-\gamma}\,\sigma^{-\gamma}M_{0}\bigg( \rho^{p}\,\frac{C_{p(\cdot)}(E, B_{16\rho}(x_{0});m)}{w(\rho^{(1)}_{0},16\rho)}+ \rho^{p} \bigg)\leqslant \\ \leqslant
(2\varepsilon^{p}_{0}+\frac{\varepsilon}{\varepsilon^{\gamma}_{0}})\,M^{p}_{0} +\gamma\,\varepsilon^{-\gamma}_{0}\,\varepsilon^{-\gamma}\,\sigma^{-\gamma}\bigg\{\bigg( \rho^{p}\,\frac{C_{p(\cdot)}(E, B_{16\rho}(x_{0});m)}{w(\frac{3}{2}\rho,16\rho)}\bigg)^{\frac{p}{p-1}}+ \rho^{p}\bigg\} ,
\end{multline*}
Iterating the last inequality, choosing $\varepsilon_{0}$ and then $\varepsilon=\varepsilon(\varepsilon_{0})$ small enough, by \eqref{eq3.5}
we arrive at
$$
M(3/2\rho) \leqslant \gamma\,\bigg( \rho^{p}\,\frac{C_{p(\cdot)}(E, B_{16\rho}(x_{0});m)}{w(\frac{3}{2}\rho,16\rho)}\bigg)^{\frac{1}{p-1}}+ \gamma\,\rho \leqslant \gamma\,\bigg( \rho^{p}\,\frac{C_{p(\cdot)}(E, B_{16\rho}(x_{0});m)}{w(\frac{3}{2}\rho,16\rho)}\bigg)^{\frac{1}{p-1}},
$$
which completes the proof of the lemma.

Now we prove inequality on the left-hand side of \eqref{eq3.6}. Let $\zeta_{1}\in C_{0}^{\infty}(B_{4\rho}(x_{0}))$, $0\leqslant \zeta_{1}\leqslant1$, $\zeta_{1}=1$ in $B_{2\rho}(x_{0})$, $|\nabla \zeta_{1}|\leqslant \dfrac{\gamma}{\rho}$. Testing
\eqref{eq3.1} by $\varphi=v-m\,\zeta_{1}^{\,q}$ , using  the Young inequality ,
we obtain for any $\varepsilon_{1} >0$
$$
\begin{aligned}
\int\limits_{\mathcal{D}} |\nabla v|^{p(x)}\,dx
&\leqslant
\gamma\,\frac{ m}{\rho} \int\limits_{K(2\rho,4\rho)}
|\nabla v|^{p(x)-1}\,\zeta_{1}^{\,q-1} \,dx
\\
&\leqslant \gamma\,\frac{m}{\varepsilon_{1}\rho}\int\limits_{K(2\rho,4\rho)}
 |\nabla v|^{p(x)}\,dx+\gamma\,\frac{ m}{\rho}\int\limits_{K(2\rho,4\rho)}
\varepsilon_{1}^{p(x)-1}\,dx.
\end{aligned}
$$
Let $\zeta_{2}\in C_{0}^{\infty}(K(\frac{3}{2}\rho,\, 6\rho))$, $0\leqslant \zeta_{2}\leqslant1$,
$\zeta_{2}=1$ in $K(2\rho,\, 4\rho)$, $|\nabla \zeta_{2}|\leqslant \dfrac{\gamma}{\rho}$.
Testing \eqref{eq3.1} by $\varphi=v\,\zeta_{2}^{\,q}$ and using the Young inequality,
we estimate the first term on the right-hand side of the previous inequality as follows:
$$
\int\limits_{K(2\rho,4\rho)} |\nabla v|^{p(x)}\,dx \leqslant \gamma \int\limits_{K(\frac{3}{2}\rho,6\rho)}
\bigg(\frac{v}{\rho}\bigg)^{p(x)}\,dx \leqslant \frac{\gamma}{\rho^{p}}\int\limits_{K(\frac{3}{2}\rho,16\rho)}\,\,w(x,\frac{3}{2}\rho)\,v^{p}\,dx.
$$
Combining the last two inequalities and using the definition of capacity,
we obtain
\begin{multline}\label{eq3.10}
C_{p(\cdot)}(E, B_{16\rho}(x_{0});m)\leqslant
\frac{1}{m}\int\limits_{\mathcal{D}} |\nabla v|^{p(x)}\,dx
\leqslant \\ \leqslant
\frac{\gamma}{\varepsilon_{1}\rho^{p+1}}\int\limits_{K(\frac{3}{2}\rho,16\rho)}\,\,w(x,\frac{3}{2}\rho)\,v^{p}\,dx+
\frac{\gamma}{\rho}\int\limits_{K(2\rho,4\rho)}\varepsilon_{1}^{p(x)-1}\,dx.
\end{multline}
Choose $\varepsilon_{1}$ from the condition  $\varepsilon_{1}=\dfrac{M(\frac{3}{2}\rho)}{\rho}$, then inequality \eqref{eq3.10} yields
$$
C_{p(\cdot)}(E, B_{16\rho}(x_{0});m)\leqslant \gamma w(\frac{3}{2}\rho,16\rho)\,\frac{M^{p-1}(\frac{3}{2}\rho)}{\rho^{p}},
$$
from which the required inequality follows, this completes the proof of the lemma.
\end{proof}
%%%%%%%%%%%%%%%%%%%%%%%%%%%%%%%%%%%%%%%%%%%%%%%%%%%%%%%%%%%%%%%%%%%%%%%%%%%%%%%%%%%%%%%%%%%%%%%%%%%%%%%%%%%%%%%%%
%%%%%%%%%%%%%%%%%%%%%%%%%%%%%%%%%%%%%%%%%%%%%%%%%%%%%%%%%%%%%%%%%%%%%%%%%%%%%%%%%%%%%%%%%%%%%%%%%%%%%%%%%%%%%%%%

\subsection{Lower bound for the function $v$}

Further we need   the following lemma.
\begin{lemma}\label{lem3.2}
Let condition \eqref{eq3.5} holds, then there exists  $\varepsilon \in(0,1)$ depending only on the data such that
\begin{equation}\label{eq3.11}
\int\limits_{K(\frac{3}{2}\rho, 16\rho)} w(x,\frac{3}{2}\rho)\chi\bigg[v\geqslant \varepsilon \bigg( \rho^{p}\,\frac{C_{p(\cdot)}(E, B_{16\rho}(x_{0});m)}{w(\frac{3}{2}\rho,16\rho)}\bigg)^{\frac{1}{p-1}}\bigg]\,dx \geqslant \gamma^{-1} w(\frac{3}{2}\rho, 16\rho),
\end{equation}
provided that inequality \eqref{eq3.7} holds.
\end{lemma}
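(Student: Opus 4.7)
The plan is to invert the logic that gave the left inequality in Lemma~\ref{lem3.1}: instead of using \eqref{eq3.10} to bound $C_{p(\cdot)}$ from above by a multiple of $M^{p-1}w/\rho^{p}$, I will use the very same inequality to extract a lower bound on $\int_{K(\frac{3}{2}\rho,16\rho)} w(x,\frac{3}{2}\rho)\,v^{p}\,dx$, and then split this integral over the level set $\{v\geqslant \varepsilon h\}$, where
$$h:=\bigg(\rho^{p}\,\frac{C_{p(\cdot)}(E,B_{16\rho}(x_{0});m)}{w(\frac{3}{2}\rho,16\rho)}\bigg)^{\frac{1}{p-1}}.$$
By Lemma~\ref{lem3.1} and \eqref{eq3.5} we have $h\asymp M(\frac{3}{2}\rho)$ and $h/\rho\geqslant 1$, both of which will be used repeatedly.

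First, I would specialize \eqref{eq3.10} to the choice $\varepsilon_{1}=\eta\,h/\rho$ with a small but fixed parameter $\eta\in(0,1)$. Since $\eta h/\rho\leqslant h/\rho$ and, on $K(2\rho,4\rho)$, the factor $(h/\rho)^{p(|x-x_{0}|)}$ is comparable to $w(x,\frac{3}{2}\rho)$ thanks to $h\asymp M(\frac{3}{2}\rho)$, the second term of \eqref{eq3.10} is estimated by
$$\frac{\gamma}{\rho}\int_{K(2\rho,4\rho)}\varepsilon_{1}^{p(x)-1}\,dx
\leqslant \gamma\,\eta^{p-1}\,\frac{h^{p-1}}{\rho^{p}}\,w(\tfrac{3}{2}\rho,16\rho)
= \gamma\,\eta^{p-1}\,C_{p(\cdot)}(E,B_{16\rho}(x_{0});m),$$
where the last equality is the definition of $h$. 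Choosing $\eta$ so small that $\gamma\eta^{p-1}\leqslant 1/2$ and absorbing this term into the left-hand side of \eqref{eq3.10}, I obtain
$$\int_{K(\frac{3}{2}\rho,16\rho)} w(x,\tfrac{3}{2}\rho)\,v^{p}\,dx \;\geqslant\; \gamma^{-1}\,h^{p}\,w(\tfrac{3}{2}\rho,16\rho).$$

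Second, I would split the same integral through the threshold $\varepsilon h$ and use the pointwise bound $v\leqslant M(\frac{3}{2}\rho)\leqslant\gamma h$:
$$\int w\,v^{p}\,dx
\;\leqslant\; (\gamma h)^{p}\,W + (\varepsilon h)^{p}\,w(\tfrac{3}{2}\rho,16\rho),$$
where $W$ denotes the left-hand side of \eqref{eq3.11}. Combining with the lower bound just established and choosing $\varepsilon>0$ so small that $\varepsilon^{p}$ is at most half of $\gamma^{-1}$, the $\varepsilon^{p}w$ term is absorbed and one concludes $W\geqslant \gamma^{-1} w(\frac{3}{2}\rho,16\rho)$, which is exactly \eqref{eq3.11}.

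The main obstacle is the careful control of the ``error'' integral $\frac{\gamma}{\rho}\int_{K(2\rho,4\rho)}\varepsilon_{1}^{p(x)-1}\,dx$: to make the absorption argument close, this term must be strictly smaller than $C_{p(\cdot)}$ itself, and this is delicate because the exponent $p(x)-1$ is spatially variable. The condition \eqref{eq3.7} is precisely what guarantees the Muckenhoupt-type bounds \eqref{eq3.2}--\eqref{eq3.3} for $w(x,\frac{3}{2}\rho)$, allowing one to replace $(h/\rho)^{p(|x-x_{0}|)}$ by a bounded multiple of $w(x,\frac{3}{2}\rho)$ under the integral; without this step the identification of the error with $\eta^{p-1}C_{p(\cdot)}$ would fail.
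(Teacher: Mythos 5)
Your proposal is correct and takes essentially the same approach as the paper: both proofs start from \eqref{eq3.10}, set $\varepsilon_{1}$ to a small multiple of $M(\tfrac{3}{2}\rho)/\rho\asymp h/\rho$, use Lemma~\ref{lem3.1} and the Muckenhoupt-type bounds to absorb the error integral into $C_{p(\cdot)}$, and then split $\int w\,v^{p}\,dx$ through the threshold $\varepsilon h$ with the crude bound $v\leqslant M(\tfrac{3}{2}\rho)\leqslant\gamma h$. The only difference is bookkeeping — you absorb in two stages (first isolating $\int w\,v^{p}\,dx\gtrsim h^{p}w$, then splitting), whereas the paper carries $C_{p(\cdot)}$ through a single chain of inequalities \eqref{eq3.10}, \eqref{eq3.12}, \eqref{eq3.13} — but the mathematics is identical.
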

\begin{proof}
To prove  \eqref{eq3.11} we use inequality \eqref{eq3.10}.
Choose $\varepsilon_{1}$ from the condition  $\varepsilon_{1}=\bar{\varepsilon}_{1} \dfrac{M(\frac{3}{2}\rho)}{\rho} ,\\ \bar{\varepsilon}_{1}\in(0,1)$, then by Lemma \ref{lem3.1} the terms on the right-hand side of \eqref{eq3.10} are estimated as follows
\begin{equation}\label{eq3.12}
\frac{\gamma}{\rho}\int\limits_{K_{2\rho,4\rho}}\varepsilon_{1}^{p(x)-1}\,dx \leqslant
\gamma \frac{ \bar{\varepsilon}_{1}}{\rho^{p}}M(3/2\rho)^{p-1}w(\frac{3}{2}\rho, 16\rho) \leqslant \gamma\bar{\varepsilon}_{1}C_{p(\cdot)}(E, B_{16\rho}(x_{0});m).
\end{equation}
Similarly, by Lemma \ref{lem3.1}
\begin{multline}\label{eq3.13}
\frac{\gamma}{\varepsilon_{1}\rho^{p}}\int\limits_{K_{\frac{3}{2}\rho,16\rho}}\,\,w(x,\frac{3}{2}\rho)\,v^{p}\,dx \leqslant
\gamma\,\frac{\varepsilon^{p}}{\bar{\varepsilon}_{1}}\,C_{p(\cdot)}(E, B_{16\rho}(x_{0});m)+\\ +
\gamma\,\frac{C_{p(\cdot)}(E, B_{16\rho}(x_{0});m)}{w(\frac{3}{2}\rho,16\rho)}\int\limits_{K(\frac{3}{2}\rho, 16\rho)} w(x,\frac{3}{2}\rho)\chi\bigg[v\geqslant \varepsilon \bigg( \rho^{p}\,\frac{C_{p(\cdot)}(E, B_{16\rho}(x_{0});m)}{w(\frac{3}{2}\rho,16\rho)}\bigg)^{\frac{1}{p-1}}\bigg]\,dx.
\end{multline}
Collecting estimates \eqref{eq3.10}, \eqref{eq3.12}, \eqref{eq3.13} we obtain
\begin{multline*}
C_{p(\cdot)}(E, B_{16\rho}(x_{0});m) \leqslant \big(\gamma \bar{\varepsilon}_{1} + \gamma\,\frac{\varepsilon^{p}}{\bar{\varepsilon}_{1}}\big)\,C_{p(\cdot)}(E, B_{16\rho}(x_{0});m)+ \\ +
\gamma\,\frac{C_{p(\cdot)}(E, B_{16\rho}(x_{0});m)}{w(\frac{3}{2}\rho,16\rho)}\int\limits_{K(\frac{3}{2}\rho, 16\rho)} w(x,\frac{3}{2}\rho)\chi\bigg[v\geqslant \varepsilon \bigg( \rho^{p}\,\frac{C_{p(\cdot)}(E, B_{16\rho}(x_{0});m)}{w(\frac{3}{2}\rho,16\rho)}\bigg)^{\frac{1}{p-1}}\bigg]\,dx.
\end{multline*}
Choosing $\bar{\varepsilon}_{1}$ by the condition $\gamma \bar{\varepsilon}_{1} =\dfrac{1}{4}$ and then choosing $\varepsilon$ by the condition $\gamma\,\dfrac{\varepsilon^{p}}{\bar{\varepsilon}_{1}}=\dfrac{1}{4}$, from the previous we arrive at
\begin{equation*}
\int\limits_{K(\frac{3}{2}\rho, 16\rho)} w(x,\frac{3}{2}\rho)\chi\bigg[v\geqslant \varepsilon \bigg( \rho^{p}\,\frac{C_{p(\cdot)}(E, B_{16\rho}(x_{0});m)}{w(\frac{3}{2}\rho,16\rho)}\bigg)^{\frac{1}{p-1}}\bigg]\,dx \geqslant \gamma^{-1} w(\frac{3}{2}\rho, 16\rho),
\end{equation*}
which completes the proof of the lemma.
\end{proof}
The following lemma is the main result of this Section
\begin{lemma}\label{lem3.3}
There exists $\bar{\varepsilon} \in(0,1)$ depending only on the data  such that either
\begin{equation}\label{eq3.14}
\bar{\varepsilon}\,m\,\bigg(\frac{|E|}{|B_{\rho}(x_{0})|}\bigg)^{\frac{1}{p-1}}\leqslant \rho,
\end{equation}
or
\begin{equation}\label{eq3.15}
\big|\big\{K(\frac{3}{2}\rho,16\rho) : v \geqslant \bar{\varepsilon}\,m\,\bigg(\frac{|E|}{|B_{\rho}(x_{0})|}\bigg)^{\frac{1}{p-1}}\big\}\big| \geqslant \gamma^{-1}\,|K(\frac{3}{2}\rho,16\rho)|,
\end{equation}
provided that inequality \eqref{eq3.7} holds.
\end{lemma}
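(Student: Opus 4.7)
The proof combines Lemma \ref{lem3.2} with an isocapacitary-type inequality and a weighted-to-unweighted measure conversion. Set $K:=K(\tfrac{3}{2}\rho,16\rho)$ and write $\mu:=\bar{\varepsilon}\,m\,(|E|/|B_{\rho}(x_{0})|)^{1/(p-1)}$ for the threshold appearing in \eqref{eq3.14}--\eqref{eq3.15}. The aim is to show that if \eqref{eq3.14} fails then $\{v\geq \mu\}$ occupies a fixed fraction of $|K|$.

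Assume first that \eqref{eq3.5} holds. Lemma \ref{lem3.2} gives
\[
\int_{K} w(x,\tfrac{3}{2}\rho)\,\chi\bigl[v\geq \tilde{m}\bigr]\,dx \geq \gamma^{-1}\,w(\tfrac{3}{2}\rho,16\rho),\qquad \tilde{m} := \varepsilon\bigl(\rho^{p}\,C_{p(\cdot)}(E,B_{16\rho}(x_{0});m)/w(\tfrac{3}{2}\rho,16\rho)\bigr)^{1/(p-1)}.
\]
To pass from this weighted bound to the desired unweighted one, I would apply H\"older's inequality with exponent $1+t$: for any $A\subset K$ one has $w(A)\leq \bigl(\int_{K}w^{1+t}\,dx\bigr)^{1/(1+t)}\,|A|^{t/(1+t)}$, and the reverse-H\"older estimate \eqref{eq3.3} (valid under \eqref{eq3.7}) yields $\bigl(\int_{K}w^{1+t}\bigr)^{1/(1+t)}\leq \gamma\,|K|^{-t/(1+t)}\,w(K)$. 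Combining these, $w(A)\geq \gamma^{-1}\,w(K)$ forces $|A|\geq \gamma^{-c}\,|K|$.

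The core technical step is the isocapacitary inequality
\[
\rho^{p}\,\frac{C_{p(\cdot)}(E,B_{16\rho}(x_{0});m)}{w(\tfrac{3}{2}\rho,16\rho)} \geq \gamma^{-1}\,m^{p-1}\,\frac{|E|}{|B_{\rho}(x_{0})|},
\]
which immediately gives $\tilde{m}\geq \mu$ after adjusting $\bar{\varepsilon}$. To prove it, I would start from the pointwise inequality $|m\nabla\psi|^{p(x)}\geq |m\nabla\psi|^{p}-1$ (valid since $p(x)\geq p$) applied to any admissible $\psi\in \mathfrak{M}(E)$, which combined with the classical Poincar\'e estimate $C_{p}(E,B_{16\rho}(x_{0}))\geq \gamma^{-1}|E|/\rho^{p}$ yields
\[
C_{p(\cdot)}(E,B_{16\rho}(x_{0});m)\geq \gamma^{-1}\,m^{p-1}\,\frac{|E|}{\rho^{p}}\,-\,\frac{|B_{16\rho}(x_{0})|}{m}.
\]
The correction $m^{-1}|B_{16\rho}(x_{0})|$ is absorbed under the working hypothesis that \eqref{eq3.14} fails: this forces $m\geq \rho$ (otherwise $m(|E|/|B_{\rho}|)^{1/(p-1)}\leq m<\rho$) and then $m^{p}|E|\geq \gamma_{0}\,\rho^{p}|B_{\rho}(x_{0})|$ once $\bar{\varepsilon}$ is chosen small. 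The factor $w(\tfrac{3}{2}\rho,16\rho)/|B_{\rho}(x_{0})|$ is controlled using $M(\tfrac{3}{2}\rho)\leq m$ together with the smallness of $p(x)-p$ on $K$ supplied by \eqref{eq3.7}. The case where \eqref{eq3.5} fails is then immediate: $\rho\,C_{p(\cdot)}/w(\tfrac{3}{2}\rho,16\rho)<1$ combined with the same isocapacitary inequality forces $m^{p-1}|E|/|B_{\rho}(x_{0})|\leq \gamma\,\rho^{p-1}$, which is \eqref{eq3.14} for $\bar{\varepsilon}$ small.

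The main obstacle is the isocapacitary inequality: matching the weight $w(\tfrac{3}{2}\rho,16\rho)$, whose size is roughly $|K|(1+M/\rho)^{p+O(\log\log(1/\rho)/\log(1/\rho))}$, against the $m^{p-1}|E|/|B_{\rho}|$ scaling requires using \eqref{eq3.7} crucially to keep $(1+M/\rho)^{p(x)-p}$ controlled by a bounded power of $\log(1/\rho)$, in combination with the a priori failure of \eqref{eq3.14} that provides the necessary size for $m$ and $|E|$. Once this capacitary inequality is in hand, Lemma \ref{lem3.2} and the Muckenhoupt-type conversion complete the argument.
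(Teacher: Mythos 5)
Your overall architecture matches the paper's: show that failure of \eqref{eq3.14} forces condition \eqref{eq3.5}, invoke Lemma~\ref{lem3.2} to get a weighted lower bound on the set where $v$ is large, and convert the weighted bound to an unweighted one using the Muckenhoupt-type inequalities from Lemma~\ref{lem2.1}. The last step in your proposal is essentially what the paper does (the paper uses Cauchy--Schwarz, i.e.\ your reverse-H\"older argument with $t=1$), and the case split on \eqref{eq3.5} is handled correctly.

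The gap is in the proof of the isocapacitary inequality
$\rho^{p}\,C_{p(\cdot)}(E,B_{16\rho}(x_{0});m)/w(\tfrac{3}{2}\rho,16\rho)\geq\gamma^{-1}m^{p-1}|E|/|B_{\rho}(x_{0})|$.
You propose to lower-bound $C_{p(\cdot)}$ by the classical $p$-capacity via the pointwise inequality
$|m\nabla\psi|^{p(x)}\geq|m\nabla\psi|^{p}-1$ and then \emph{separately} control the weight factor
$w(\tfrac{3}{2}\rho,16\rho)/|B_{\rho}(x_{0})|$. But that weight is of size
$\approx\bigl(1+M(\tfrac{3}{2}\rho)/\rho\bigr)^{p(16\rho)}$, and writing
$p(16\rho)=p+L\tfrac{\log\log(1/16\rho)}{\log(1/16\rho)}$ shows that the extra factor
$\bigl(1+M(\tfrac{3}{2}\rho)/\rho\bigr)^{p(16\rho)-p}$ can be as large as a power of $\log(1/\rho)$; you acknowledge this but claim \eqref{eq3.7} renders it harmless. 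It does not: propagating this loss through Lemma~\ref{lem3.2} would give
$\tilde m\gtrsim\bigl(\log(1/\rho)\bigr)^{-L/(p-1)}m\bigl(|E|/|B_{\rho}|\bigr)^{1/(p-1)}$,
and then the constant $\bar\varepsilon$ you set in \eqref{eq3.14}--\eqref{eq3.15} would have to depend on $\rho$, which the lemma forbids. Moreover, even the $\log$-free part of the weight, $(1+M(\tfrac{3}{2}\rho)/\rho)^{p}$, sits in your denominator with no matching factor in your capacity lower bound.

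The paper avoids both losses by \emph{not} decoupling the weight from the capacity. It starts from $m^{p}|E|\leq m^{p}\int\varphi^{p}$, applies H\"older against $w^{-1/(\kappa_{1}-1)}$, then the \emph{weighted} Sobolev inequality of Lemma~\ref{lem2.2}, and finally a Young inequality tuned so that $w\,|\nabla(m\varphi)|^{p}$ splits into $w(1+M(\tfrac{3}{2}\rho)/\rho)^{p}$ plus $|\nabla(m\varphi)|^{p(x)}$. The first of these is absorbed using the two-sided bound \eqref{eq3.6} from Lemma~\ref{lem3.1} (so that $(1+M(\tfrac{3}{2}\rho)/\rho)^{p}$ is controlled by $C_{p(\cdot)}/w$ itself), and the second gives exactly the variable-exponent capacity after taking the infimum over $\varphi$. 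In this way the weight $w$ appears on both sides and cancels cleanly, with no residual $\log^{L}$ loss. To repair your argument you would need to replace the pointwise comparison to $p$-capacity by this weighted Sobolev route (or some equivalent device that keeps the weight on both sides), which is a genuinely different and essential ingredient.
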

\begin{proof}
Lemma \ref{lem3.3} is a consequence of Lemma \ref{lem3.2}, for this we first estimate the capacity of the set $E$ from below. Let $\varphi \in W_{0}(B_{16\rho}(x_{0})), \varphi =1 $ on $E$ , then by Lemmas \ref{lem2.1}, \ref{lem2.2}, \ref{lem3.1} and using the evident inequalities
$\gamma^{-1} w(\frac{3}{2}\rho, 16\rho)\leqslant \int\limits_{B_{16\rho}(x_{0})}w(x,\frac{3}{2}\rho)\,dx \leqslant\gamma w(\frac{3}{2}\rho, 16\rho)$  we have
\begin{multline}\label{eq3.16}
m^{p}\,|E| \leqslant m^{p}\int\limits_{B_{16\rho}(x_{0})}\varphi^{p}\,dx\leqslant \\ \leqslant
\bigg(\int\limits_{B_{16\rho}(x_{0})}w(x,\frac{3}{2}\rho)|(m\varphi)|^{p\kappa_{1}}\,dx\bigg)^{\frac{1}{\kappa_{1}}} \bigg(\int\limits_{B_{16\rho}(x_{0})}[w(x,\frac{3}{2}\rho)]^{-\frac{1}{\kappa_{1}-1}}\,dx\bigg)^{1-\frac{1}{\kappa_{1}}}\leqslant \\ \leqslant
\gamma\,\rho^{p}\,[w(\frac{3}{2}\rho,16\rho)]^{\frac{1}{\kappa_{1}}-1}
\bigg(\int\limits_{B_{16\rho}(x_{0})}[w(x,\frac{3}{2}\rho)]^{-\frac{1}{\kappa_{1}-1}}\,dx\bigg)^{1-\frac{1}{\kappa_{1}}}
\int\limits_{B_{16\rho}(x_{0})}w(x,\frac{3}{2}\rho)|\nabla(m\varphi)|^{p}\,dx\leqslant \\ \leqslant
\gamma\frac{\rho^{p+n}}{w(\frac{3}{2}\rho,16\rho)}\,\int\limits_{B_{16\rho}(x_{0})}w(x,\frac{3}{2}\rho)|\nabla(m\varphi)|^{p}\,dx
\leqslant\\ \leqslant \gamma\frac{\rho^{p+n}}{w(\frac{3}{2}\rho,16\rho)}\bigg(\int\limits_{B_{16\rho}(x_{0})}w(x,\frac{3}{2}\rho)\bigg(1+\frac{M(\frac{3}{2}\rho)}{\rho}\bigg)^{p}\,dx + \int\limits_{B_{16\rho}(x_{0})}|\nabla(m\varphi)|^{p(x)}\,dx\bigg) \leqslant\\ \leqslant
\gamma \rho^{p+n} +\gamma\,m\, \frac{\rho^{p+n}}{w(\frac{3}{2}\rho,16\rho)}\,C_{p(\cdot)}(E, B_{16\rho}(x_{0}), m)
+ \gamma\frac{\rho^{p+n}}{w(\frac{3}{2}\rho,16\rho)}\int\limits_{B_{16\rho}(x_{0})}|\nabla(m\varphi)|^{p(x)}\,dx.
\end{multline}
Since $\varphi$ is arbitrary, estimate \eqref{eq3.16} yields
\begin{equation*}
m^{p}\,|E| \leqslant \gamma \rho^{p+n} + \gamma\,m\, \frac{\rho^{p+n}}{w(\frac{3}{2}\rho,16\rho)}\,C_{p(\cdot)}(E, B_{16\rho}(x_{0}), m).
\end{equation*}
If inequality \eqref{eq3.14} is violated then
\begin{equation*}
m^{p}\frac{|E|}{|B_{\rho}(x_{0})|}\geqslant m^{p}\bigg(\frac{|E|}{|B_{\rho}(x_{0})|}\bigg)^{\frac{p}{p-1}} \geqslant\bigg(\frac{\rho}{\bar{\varepsilon}}\bigg)^{p} ,
\end{equation*}
so, if $\bar{\varepsilon}$ is sufficiently small, from the previous we arrive at
\begin{equation*}
\frac{\rho^{p}}{w(\frac{3}{2}\rho,16\rho)}\,C_{p(\cdot)}(E, B_{16\rho}(x_{0}), m) \geqslant \gamma^{-1}\,m^{p-1}\,\frac{|E|}{|B_{\rho}(x_{0})|}.
\end{equation*}
And hence
\begin{equation*}
\rho\,\frac{C_{p(\cdot)}(E, B_{16\rho}(x_{0});m)}{w(\frac{3}{2}\rho,16\rho)} \geqslant \gamma^{-p}\,\rho^{1-p}m^{p-1}\,\frac{|E|}{|B_{\rho}(x_{0})|}\geqslant \gamma^{-p}\,\bar{\varepsilon}^{-p} \geqslant \gamma_{0},
\end{equation*}
provided that \eqref{eq3.14} is violated and $\bar{\varepsilon}$ is sufficiently small. Now we use Lemma \ref{lem3.2} for this we set
$F:=\big\{K(\frac{3}{2}\rho,16\rho) : v \geqslant \bar{\varepsilon}\,m\,\bigg(\dfrac{|E|}{|B_{\rho}(x_{0})|}\bigg)^{\frac{1}{p-1}}\big\}$  and
$w(F):= \int\limits_{K(\frac{3}{2}\rho, 16\rho)} w(x,\frac{3}{2}\rho)\chi(F)\,dx.$\\ We have by Lemmas \ref{lem2.1} and \ref{lem3.2}
\begin{equation*}
\gamma^{-1}\leqslant \frac{w(F)}{w(\frac{3}{2}\rho,16\rho)} \leqslant \frac{|F|^{\frac{1}{2}}}{w(\frac{3}{2}\rho,16\rho)}
\bigg(\int\limits_{K(\frac{3}{2}\rho,16\rho)}[w(x,\frac{3}{2}\rho)]^{2}\,dx\bigg)^{\frac{1}{2}}\leqslant
\gamma \bigg(\frac{|F|}{|K(\frac{3}{2}\rho,16\rho)|}\bigg)^{\frac{1}{2}},
\end{equation*}
which completes the proof of the lemma.
\end{proof}

%%%%%%%%%%%%%%%%%%%%%%%%%%%%%%%%%%%%%%%%%%%%%%%%%%%%%%%%%%%%%%%%%%%%%%%%%%%%%%%%%%%%%%%%%%%%%%%%%%%%%%%%%%%%%%%%%
%%%%%%%%%%%%%%%%%%%%%%%%%%%%%%%%%%%%%%%%%%%%%%%%%%%%%%%%%%%%%%%%%%%%%%%%%%%%%%%%%%%%%%%%%%%%%%%%%%%%%%%%%%%%%%%%
%%%%%%%%%%%%%%%%%%%%%%%%%%%%%%%%%%%%%%%%%%%%%%%%%%%%%%%%%%%%%%%%%%%%%%%%%%%%%%%%%%%%%%%%%%%%%%%%%%%%%%%%%%%%%%%%%%

\section{Harnack's inequality,
proof of Theorems \ref{th1.2} and \ref{th1.3}}\label{Sec4}

\subsection{Weak Harnack inequality, proof of Theorem \ref{th1.2}}
For $0<m<M$ set $E(\rho,m):=\big\{B_{\rho}(x_{0}): u\geqslant m \big\}.$ As it was mentioned in Section $1$ Theorem \ref{th1.2} is a simple consequence of the following lemma
\begin{lemma}\label{lem4.1}
Let $u$ be a non-negative bounded super-solution to equation \eqref{eq1.1} in $\Omega$ and let condition \eqref{eq1.8} be fulfilled, then there exist positive numbers $C_{2}, C_{3}$ depending only on the data such that
\begin{equation}\label{eq4.1}
|E(\rho,m)|\leqslant C_{2}\,|B_{\rho}(x_{0})|\,m^{1-p}\,\big(\rho + \min\limits_{B_{\frac{\rho}{2}}(x_{0})} u\big)^{p-1},
\end{equation}
provided that $B_{16\rho}(x_{0})\subset \Omega$ and
\begin{equation}\label{eq4.2}
\frac{1}{\log\log\dfrac{1}{16\rho}} + C_{3}\,L\,\frac{\log\log\dfrac{1}{16\rho}}{\log\dfrac{1}{16\rho}} \leqslant 1.
\end{equation}
\end{lemma}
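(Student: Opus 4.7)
The plan is to use the auxiliary obstacle solution $v=v(x,m)$ constructed in Section~\ref{Sec3} with $E=E(\rho,m)$ as a barrier for $u$, transfer the density of positivity provided by Lemma~\ref{lem3.3} from $v$ to $u$ by comparison, and then invoke the expansion of positivity Lemma~\ref{lem2.5} to turn that density into a pointwise bound $\min_{B_{\rho/2}(x_{0})}u\geqslant c\,m_{0}$, where $m_{0}:=\bar\varepsilon\,m\,(|E|/|B_{\rho}(x_{0})|)^{1/(p-1)}$; rearranging will then yield \eqref{eq4.1}.

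More precisely, let $v$ solve \eqref{eq3.1} in $\mathcal{D}=B_{16\rho}(x_{0})\setminus E$ with $v-m\psi\in W_{0}(\mathcal{D})$ and $\psi\in W_{0}(B_{16\rho}(x_{0}))$, $\psi=1$ on $E$. Since $0\leqslant v\leqslant m$, $u\geqslant 0$, and $u\geqslant m$ a.e.\ on $E$, one checks that $(v-u)_{+}\in W_{0}(\mathcal{D})$; testing \eqref{eq1.7} and \eqref{eq3.1} by this function and subtracting, the strict monotonicity \eqref{eq1.8} forces $(v-u)_{+}\equiv 0$, i.e.\ $u\geqslant v$ in $\mathcal{D}$. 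Apply Lemma~\ref{lem3.3} to this $E$. In alternative \eqref{eq3.14} we have $m_{0}\leqslant\rho$, which rearranges directly to \eqref{eq4.1} with $C_{2}=\bar\varepsilon^{-(p-1)}$. Otherwise \eqref{eq3.15} together with $u\geqslant v$ gives
\[
|\{x\in K(\tfrac{3}{2}\rho,16\rho):\,u(x)\geqslant m_{0}\}|\geqslant \gamma^{-1}|K(\tfrac{3}{2}\rho,16\rho)|\geqslant \gamma_{0}^{-1}|B_{16\rho}(x_{0})|,
\]
so $\{u\geqslant m_{0}\}$ occupies a positive fraction of $B_{16\rho}(x_{0})$. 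Apply Lemma~\ref{lem2.5} to $u$ on $B_{16\rho}(x_{0})$ with $\mu_{-}=0$, $\mu_{+}=M$, $\omega_{16\rho}=M$ and $\xi=m_{0}/M$, so that $\xi\omega_{16\rho}=m_{0}$ and hypothesis \eqref{eq2.22} holds with some $\alpha>0$ depending only on the data. Either the alternative \eqref{eq2.19} forces $M\leqslant\gamma\rho$ and \eqref{eq4.1} follows from the $\rho$-term on its right-hand side, or \eqref{eq2.23} gives $u\geqslant 2^{-s_{*}-1}M$ a.e.\ in $B_{8\rho}(x_{0})\supset B_{\rho/2}(x_{0})$; since the proof of Lemma~\ref{lem2.5} exhibits $s_{*}-\log(1/\xi)$ as a data-dependent constant, $2^{-s_{*}-1}M\geqslant c\,m_{0}$, so $\min_{B_{\rho/2}(x_{0})}u\geqslant c\,m_{0}$, which is exactly \eqref{eq4.1}.

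The main obstacle is reconciling the smallness condition \eqref{eq2.21} required by Lemma~\ref{lem2.5} with the single hypothesis \eqref{eq4.2}: with the choice $\xi=m_{0}/M$, one has $s_{*}\sim\log(M/m_{0})$, which is not a priori bounded as $m_{0}\to 0$, and the resulting product $L\,s_{*}\,\log\log(1/16\rho)/\log(1/16\rho)$ can reach order $L\log\log(1/16\rho)$, exceeding what a naive \eqref{eq4.2} permits. Overcoming this requires either isolating the subcase $m_{0}\leqslant\rho$ (which bounds $\log(1/\xi)$ by $\log(M/\rho)$ and allows the extra logarithm to be absorbed into $C_{3}=C_{3}(M)$), or iterating Lemma~\ref{lem2.5} across a data-dependent bounded number of dyadic scales centered at $x_{0}$ with $\xi$ kept of order $1$ at each step, in the spirit of the Mazya--Landis chaining mentioned in the introduction; the arithmetic of these losses against the budget in \eqref{eq4.2} is what fixes the constants $C_{2}$ and $C_{3}$.
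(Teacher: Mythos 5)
Your proposal follows the paper's argument step by step: construct $v$, compare $u\geqslant v$ via \eqref{eq1.8}, invoke Lemma~\ref{lem3.3} with $E=E(\rho,m)$, split into the two alternatives \eqref{eq3.14}/\eqref{eq3.15}, and in the second case apply Lemma~\ref{lem2.5} on $B_{16\rho}(x_{0})$ with $\mu_{-}=0$ and $\xi\omega_{r}=m_{0}$. That is exactly the route the paper takes.

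Your concern about reconciling \eqref{eq2.21} with \eqref{eq4.2} is legitimate as a reading of the \emph{stated} hypothesis of Lemma~\ref{lem2.5}, but the resolution is simpler than either of your two suggested workarounds (and neither is what the paper does). In the proof of Lemma~\ref{lem2.5} the index $s$ runs over $s=\log\frac{1}{\xi},\ldots,s_{*}-1$, so the weight-comparability ratio satisfies $M_{+}(u,k_{s},r)/M_{+}(u,k_{s_{*}},r)\leqslant 2^{\,s_{*}+1-s}\leqslant 2^{\,s_{*}+1-\log\frac{1}{\xi}}=2^{\,s_{0}+2}$, where $s_{0}:=s_{*}-\log\frac{1}{\xi}-1$ is fixed by $\gamma(\alpha)\,s_{0}^{-\delta/(p(1-\delta))}=\nu$ and therefore depends only on $\alpha$ and the structural data, not on $\xi$. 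The displayed chain $2^{(s_{*}+1-s)p(|x-x_{0}|)}\leqslant 2^{L(s_{*}+1)\log\log(1/r)/\log(1/r)}$ drops $s$ from below by $0$ rather than by $\log\frac{1}{\xi}$; what the proof actually requires is $(s_{0}+2)L\frac{\log\log(1/r)}{\log(1/r)}\leqslant 1$. With this sharper reading, the constant in \eqref{eq4.6} (and hence $C_{3}$ in \eqref{eq4.2}) is genuinely data-dependent, and a single application of Lemma~\ref{lem2.5} suffices — no iteration across scales or $C_{3}(M)$ absorption is needed. One small slip in your write-up: with $\omega_{16\rho}=M$ and $s_{*}\sim\log(M/m_{0})+s_{0}$, the alternative \eqref{eq2.19} yields $m_{0}\leqslant\gamma\rho$, not $M\leqslant\gamma\rho$; this still gives \eqref{eq4.1} exactly as in the \eqref{eq3.14} subcase, so the conclusion stands.
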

\begin{proof}
We construct the solution $v$ of the problem \eqref{eq3.1} in $\mathcal{D}=B_{16\rho}(x_{0})\setminus E(\rho,m)$, since $u\geqslant v$ on $\partial \mathcal{D}$, by \eqref{eq1.8} $u\geqslant v$ in $\mathcal{D}$. First we use Lemma \ref{lem3.3}, if inequality \eqref{eq3.14}
is violated, i.e. if
\begin{equation}\label{eq4.3}
\bar{\varepsilon}\,m\,\bigg(\frac{|E(\rho,m)|}{|B_{\rho}(x_{0})|}\bigg)^{\frac{1}{p-1}}\geqslant \rho,
\end{equation}
by Lemma \ref{lem3.3} there holds
\begin{multline*}
\big|\big\{B_{16\rho}(x_{0}) : u \geqslant \bar{\varepsilon}\,m\,\bigg(\frac{|E(\rho,m)|}{|B_{\rho}(x_{0})|}\bigg)^{\frac{1}{p-1}}\big\}\big| \geqslant \\ \geqslant \big|\big\{K(\frac{3}{2}\rho,16\rho) : u \geqslant \bar{\varepsilon}\,m\,\bigg(\frac{|E(\rho,m)|}{|B_{\rho}(x_{0})|}\bigg)^{\frac{1}{p-1}}\big\}\big| \geqslant \\ \geqslant \big|\big\{K(\frac{3}{2}\rho,16\rho) : v \geqslant \bar{\varepsilon}\,m\,\bigg(\frac{|E(\rho,m)|}{|B_{\rho}(x_{0})|}\bigg)^{\frac{1}{p-1}}\big\}\big|  \geqslant \gamma^{-1}\,|B_{16\rho}(x_{0})|,
\end{multline*}
provided that
\begin{equation}\label{eq4.4}
\frac{1}{\log\log\dfrac{1}{16\rho}} + \gamma_{1} \,L\,\frac{\log\log\dfrac{1}{16\rho}}{\log\dfrac{1}{16\rho}} \leqslant 1.
\end{equation}
Now we use Lemma \ref{lem2.5} with $r=16 \rho$, $\mu_{-}=0$~~ and~~ $\xi\,\omega_{r}=\bar{\varepsilon}\,m\,\bigg(\dfrac{|E(\rho,m)|}{|B_{\rho}(x_{0})|}\bigg)^{\frac{1}{p-1}}$, we obtain that
\begin{equation}\label{eq4.5}
u(x)\,\geqslant 2^{-s_{*}-1}\,m\,\bigg(\dfrac{|E(\rho,m)|}{|B_{\rho}(x_{0})|}\bigg)^{\frac{1}{p-1}},\quad x\in B_{8\rho}(x_{0}),
\end{equation}
provided that
\begin{equation}\label{eq4.6}
\frac{1}{\log\log\dfrac{1}{16\rho}} + s_{*}\,L\,\frac{\log\log\dfrac{1}{16\rho}}{\log\dfrac{1}{16\rho}} \leqslant 1.
\end{equation}
Choosing $C_{2}, C_{3}$ sufficiently large, collecting  \eqref{eq4.3}--\eqref{eq4.6} we arrive at \eqref{eq4.1}, which completes the proof of the lemma.
\end{proof}
To complete the proof of Theorem \ref{th1.2}  set $m(\rho/2):= \rho/2 + \min\limits_{B_{\frac{\rho}{2}}(x_{0})} u $, then by Lemma \ref{lem4.1} for $\theta\in(0, p-1)$ we have
\begin{multline*}
|B_{\rho}(x_{0})|^{-1}\,\int\limits_{B_{\rho}(x_{0})}u^{\theta}\,dx= \theta\,|B_{\rho}(x_{0})|^{-1}\,\int\limits_{0}^{\infty}|E(\rho,m)|\,m^{\theta-1}\,dm\leqslant m^{\theta}(\rho/2) +\\ +\theta\,|B_{\rho}(x_{0})|^{-1}\,\int\limits_{m(\rho/2)}^{\infty}|E(\rho,m)|\,m^{\theta-1}\,dm
\leqslant m^{\theta}(\rho/2) +\gamma\,m^{p-1}(\rho/2)\,\int\limits_{m(\rho/2)}^{\infty} m^{\theta -p}\,dm \leqslant \\
\leqslant \frac{\gamma}{p-1-\theta}\,m^{\theta}(\rho/2),
\end{multline*}
provided that
\begin{equation*}
\frac{1}{\log\log\dfrac{1}{16\rho}} + C_{3}\,L\,\frac{\log\log\dfrac{1}{16\rho}}{\log\dfrac{1}{16\rho}} \leqslant 1,
\end{equation*}
which completes the proof of Theorem \ref{th1.2}.

%%%%%%%%%%%%%%%%%%%%%%%%%%%%%%%%%%%%%%%%%%%%%%%%%%%%%%%%%%%%%%%%%%%%%%%%%%%%%%%%%%%%%%%%%%%%%%%%%%%%%%%%%%%%%%%%%
%%%%%%%%%%%%%%%%%%%%%%%%%%%%%%%%%%%%%%%%%%%%%%%%%%%%%%%%%%%%%%%%%%%%%%%%%%%%%%%%%%%%%%%%%%%%%%%%%%%%%%%%%%%%%%%%

\subsection{Proof of Theorem \ref{th1.3}}
The proof of Theorem \ref{th1.3} is almost standard. For fixed $\sigma \in(0,1/8), s\in (3/4\rho, 7/8\rho), k>0$ and
$j= 0, 1, 2,...$ set $k_{j}:= k- k 2^{-j}, \rho_{j}:= s(1-\sigma +\sigma 2^{-j}), \bar{\rho}_{j}:=\dfrac{1}{2}(\rho_{j}+\rho_{j+1}),
B_{j}:=B_{\rho_{j}}(x_{0}),\\ \bar{B}_{j}:=B_{\bar{\rho}_{j}}(x_{0})$ and let $M_{0}:=\max\limits_{B_{0}} u,
M_{\sigma}:=\max\limits_{B_{\infty}} u$. Denote by $\zeta_{j}$ a non-negative piecewise smooth cutoff function in $\bar{B}_{j}$
that equals one on $B_{j+1}$, such that $|\nabla \zeta_{j}|\leqslant \gamma\dfrac{2^{j}}{\sigma\rho}.$ Set also \\
$w_{0}(x):=\bigg(1+\dfrac{M_{0}}{\rho_{0}}\bigg)^{p(|x-x_{0}|)}$ and $w_{0}(F):= \int\limits_{F}\,w_{0}(x)\,dx$. Evidently we have\\
$\bigg(\dfrac{M_{0}}{\rho_{0}}\bigg)^{p(|x-x_{0}|)} \leqslant w_{0}(x) \leqslant \gamma\bigg(\dfrac{M_{0}}{\rho_{0}}\bigg)^{p(|x-x_{0}|)} $,
if $M_{0} \geqslant \rho_{0}$.

Note that similarly to \eqref{eq2.5}, \eqref{eq2.6}  there hold
\begin{equation}\label{eq4.7}
\gamma^{-1}\bigg(1+\frac{M_{0}}{\rho_{0}}\bigg)^{-tp(\rho_{0})}\leqslant |B_{0}|^{-1}\,\int\limits_{B_{0}} w^{-t}_{0}(x)\,dx\leqslant \gamma \bigg(1+\frac{M_{0}}{\rho_{0}}\bigg)^{-tp(\rho_{0})},  t>0,
\end{equation}
\begin{equation}\label{eq4.8}
\gamma^{-1}\bigg(1+\frac{M_{0}}{\rho_{0}}\bigg)^{tp(\rho_{0})}\leqslant |B_{0}|^{-1}\,\int\limits_{B_{0}} w^{t}_{0}(x)\,dx\leqslant \gamma \bigg(1+\frac{M_{0}}{\rho_{0}}\bigg)^{tp(\rho_{0})},\quad  t>0,
\end{equation}
provided that
\begin{equation}\label{eq4.9}
\frac{1}{\log\log\dfrac{1}{\rho_{0}}} + t\,\bar{C}\,L\,\log(1+\frac{M}{\rho_{0}})\frac{\log\log\dfrac{1}{\rho_{0}}}{\log^{2}\dfrac{1}{\rho_{0}}} \leqslant 1,
\end{equation}
where $\bar{C}=\max(C, C_{1})$ and $C$, $C_{1}$ are the constants defined in  Lemmas  \ref{lem2.1}, \ref{lem2.2}. Therefore Lemmas \ref{lem2.1}, \ref{lem2.2} continue to hold in $B_{0}$ with $w_{\pm}(x, u, k, r)$ replaced by $w_{0}(x)$.

Furthter we will assume that $M_{0} \geqslant \rho_{0}.$ Test identity \eqref{eq1.7} by $\varphi=(u-k_{j+1})_{+}\,\zeta^{q}_{j}$, then
\begin{multline*}
\int\limits_{\bar{B}_{j}}\,| \nabla(u-k_{j+1})_{+}|^{p(x)}\,\zeta^{q}_{j}\,dx \leqslant
\gamma\,2^{j\gamma}\,\int\limits_{\bar{B}_{j}}\bigg(\frac{u-k_{j+1}}{\sigma\rho}\bigg)_{+}^{p(x)}\,dx \leqslant \\
\leqslant \gamma\,\sigma^{-\gamma}\,\frac{2^{j\gamma}}{\rho^{p}}\,\int\limits_{B_{j}}\,w_{0}(x)\,(u-k_{j})_{+}^{p}\,dx.
\end{multline*}
From this by the Young inequality, assuming that $k> \varepsilon_{0}\,M_{0}$, $\varepsilon_{0}\in (0, 1)$ is small enough, we obtain
\begin{multline*}
\int\limits_{\bar{B}_{j}}\,w_{0}(x)\,| \nabla(u-k_{j+1})_{+}|^{p}\,\zeta^{q}_{j}\,dx \leqslant
\gamma \bigg(\frac{M_{0}}{\rho}\bigg)^{p}\,\int\limits_{\bar{B}_{j}\cap\{u> k_{j+1}\}}\,w_{0}(x)\,dx+\\
 \gamma\,\sigma^{-\gamma}\,\frac{2^{j\gamma}}{\rho^{p}}\,\,\int\limits_{B_{j}}w_{0}(x)\,(u-k_{j+1})_{+}^{p}\,dx \leqslant
\gamma\,\sigma^{-\gamma}\,2^{\j\gamma}\bigg(\frac{M_{0}}{\rho\,k}\bigg)^{p}\,\int\limits_{B_{j}\cap\{u> k_{j}\}}\,w_{0}(x)\,(u-k_{j})^{p}_{+}\,dx+\\ +\gamma\,\sigma^{-\gamma}\,\frac{2^{j\gamma}}{\rho^{p}}\,\,\int\limits_{B_{j}}w_{0}(x)\,(u-k_{j+1})_{+}^{p}\,dx
\leqslant \gamma\,\varepsilon^{-\gamma}_{0}\,\sigma^{-\gamma}\,\frac{2^{j\gamma}}{\rho^{p}}\,\,\int\limits_{B_{j}}w_{0}(x)\,(u-k_{j+1})_{+}^{p}\,dx,
\end{multline*}
provided that \eqref{eq4.9} holds.  From this similarly to \eqref{eq3.8} we obtain
\begin{equation}\label{eq4.10}
M_{\sigma}\leqslant \varepsilon^{\frac{1}{p}}_{0}\,M_{0}+\gamma\,\varepsilon^{-\gamma}_{0}\,\sigma^{-\gamma}\,
\bigg([w_{0}(B_{0})]^{-1}\int\limits_{B_{0}}\,w_{0}(x)\,u^{p}\,dx\bigg)^{\frac{1}{p}}.
\end{equation}
Let us estimate the second term on the right-hand side of \eqref{eq4.10}, using  Lemma \ref{lem2.1} we obtain for any $0< \theta <p$
and any $\varepsilon \in(0, 1)$
\begin{multline*}
\bigg([w_{0}(B_{0})]^{-1}\int\limits_{B_{0}}\,w_{0}(x)\,u^{p}\,dx\bigg)^{\frac{1}{p}}
\leqslant M^{1-\frac{\theta}{2p}}_{0}\,\bigg([w_{0}(B_{0})]^{-1}\int\limits_{B_{0}}\,w_{0}(x)\,u^{\frac{\theta}{2}}\,dx\bigg)^{\frac{1}{p}}\leqslant \\ \leqslant \varepsilon M_{0} +\gamma \varepsilon^{-\gamma}\bigg([w_{0}(B_{0})]^{-1}\int\limits_{B_{0}}\,w_{0}(x)\,u^{\frac{\theta}{2}}\,dx\bigg)^{\frac{2}{\theta}} \leqslant\\ \leqslant \varepsilon M_{0}  +\gamma \varepsilon^{-\gamma}[w_{0}(B_{0})]^{-\frac{2}{\theta}}
\bigg(\int\limits_{B_{0}}\,w^{2}_{0}(x)\,dx\bigg)^{\frac{1}{\theta}}\,\bigg(\int\limits_{B_{0}}\,u^{\theta}\,dx\bigg)^{\frac{1}{\theta}}\leqslant \\ \leqslant \varepsilon M_{0} + \gamma \varepsilon^{-\gamma}\,\bigg(\rho^{-n}\,\int\limits_{B_{0}}\,u^{\theta}\,dx\bigg)^{\frac{1}{\theta}},
\end{multline*}
which together with \eqref{eq4.10} yield
\begin{equation*}
M_{\sigma}\leqslant (\varepsilon^{\frac{1}{p}}_{0}+\varepsilon)\,M_{0}+\gamma\,\varepsilon^{-\gamma}_{0}\,\varepsilon^{-\gamma}\,\sigma^{-\gamma}\,\bigg(\rho^{-n}\,\int\limits_{B_{0}}\,u^{\theta}\,dx\bigg)^{\frac{1}{\theta}}.
\end{equation*}
Choosing $\varepsilon_{0}$, $\varepsilon$ small enough, iterating this inequality and taking into account our choices we arrive at
\begin{equation*}
\max\limits_{B_{\rho/2}(x_{0})} u \leqslant \gamma\,
\bigg(\rho^{-n}\int\limits_{B_{0}}u^{\theta}\,dx\bigg)^{\frac{1}{\theta}} +\gamma\,\rho,
\end{equation*}
provided that \eqref{eq4.9} is valid. This proves inequality \eqref{eq1.11}.

Collecting estimates \eqref{eq1.9}, \eqref{eq1.11} with $\theta=\frac{1}{2}(p-1),$ we arrive at
\begin{equation*}
\max\limits_{B_{\rho/2}(x_{0})} u \leqslant \gamma\, \big(\min\limits_{B_{\rho/2}(x_{0})} u +
\rho\big),
\end{equation*}
which completes the proof of Theorem \ref{th1.3}.

%\vskip 100pt

%%%%%%%%%%%%%%%%%%%%%%%%%%%%%%%%%%%%%%%%%%%%%%%%%%%%%%%%%%%%%%%%%%%%%%%%%%%%%%%%%%%%%%%%%%%%%%%%%%%%%%%%%%%%%%%%%
%%%%%%%%%%%%%%%%%%%%%%%%%%%%%%%%%%%%%%%%%%%%%%%%%%%%%%%%%%%%%%%%%%%%%%%%%%%%%%%%%%%%%%%%%%%%%%%%%%%%%%%%%%%%%%%%
%%%%%%%%%%%%%%%%%%%%%%%%%%%%%%%%%%%%%%%%%%%%%%%%%%%%%%%%%%%%%%%%%%%%%%%%%%%%%%%%%%%%%%%%%%%%%%%%%%%%%%%%%%%%%%%%%%

%\section{Section 5}\label{Sec5}

\vskip3.5mm
{\bf Acknowledgements.} The research of the authors was supported by Project 0120U100178 from the National Academy of Sciences of Ukraine.

\newpage

CONTACT INFORMATION

\medskip

\medskip
\textbf{Igor I.~Skrypnik}\\Institute of Applied Mathematics and Mechanics,
National Academy of Sciences of Ukraine, \\ \indent Batiouk Str. 19, 84116 Sloviansjk, Ukraine\\
Vasyl' Stus Donetsk National University,
\\ \indent 600-richcha Str. 21, 21021 Vinnytsia, Ukraine\\ihor.skrypnik@gmail.com

\medskip
\textbf{Yevgeniia A. Yevgenieva}
\\ Max Planck Institute for Dynamics of Complex Technical Systems, \\ \indent Sandtorstrasse 1, 39106 Magdeburg, Germany
\\Institute of Applied Mathematics and Mechanics,
National Academy of Sciences of Ukraine, \\ \indent Batiouk Str. 19, 84116 Sloviansjk, Ukraine\\yevgeniia.yevgenieva@gmail.com

\end{document}